\newtheorem{theorem}{Theorem}[section]
\newtheorem{lemma}[theorem]{Lemma}
\newtheorem*{lemma*}{Lemma}
\newtheorem{corollary}[theorem]{Corollary}
\theoremstyle{remark}
\newtheorem{remark}{Remark}[section]
\newcommand{\Cbn}{\mathsf{Cov}_{\rm{bnd}}}
\newcommand{\Cbl}{\mathsf{Cov}_{\rm{balls}}}
\newcommand{\Pbn}{\mathsf{Pack}_{\rm{bnd}}}
\newcommand{\Pbl}{\mathsf{Pack}_{\rm{balls}}}
\newcommand{\tbn}{\tau_{\rm{bnd}}}
\newcommand{\tbl}{\tau_{\rm{balls}}}
\newcommand{\nbn}{\nu_{\rm{bnd}}}
\newcommand{\nbl}{\nu_{\rm{balls}}}
\newcommand{\tvc}{\mathsf{tvc}}
\newcommand{\vc}{\mathsf{vc}}
\newcommand{\wds}{\mathsf{wds}}
\DeclareMathOperator{\opt}{\mathsf{OPT}}
\newcommand{\cost}{\mathsf{cost}}
\newcommand{\intv}[2]{\left \{ #1, \dots, #2 \right \}}
\title{Multiparty equality in the local broadcast model}
\author[L.~Esperet]{Louis Esperet}
\address[L.~Esperet]{CNRS, Université Grenoble Alpes, G-SCOP,
  Grenoble, France}
\email{louis.esperet@grenoble-inp.fr}
\author[J.-F.~Raymond]{Jean-Florent Raymond}
\address[J.-F.~Raymond]{CNRS, ENS de Lyon, Université Claude Bernard Lyon 1, LIP, UMR 5668, Lyon, France}
\email{jean-florent.raymond@cnrs.fr}
\date{\today}
\thanks{The authors are partially supported by the French ANR Projects ENEDISC
  (ANR-24-CE48-7768) and GRALMECO (ANR-21-CE48-0004), and by LabEx
  PERSYVAL-lab (ANR-11-LABX-0025).}
\begin{document}

\begin{abstract}
    In this paper we consider the \emph{multiparty equality problem} in graphs, where every vertex of a graph $G$ is given an input, and the goal of the vertices is to decide whether all inputs are equal. We study this problem in the \emph{local broadcast model}, where a message sent by a vertex is received by all its neighbors and the total cost of a protocol is the sum of the lengths of the messages sent by the vertices.
    This setting was studied by Khan and Vaidya, who gave in 2021 a protocol achieving a 4-approximation in the general case.
    
    We study this multiparty communication problem through the lens of network topology. 
    We design a new protocol for 2-connected graphs, whose efficiency relies on the notion of total vertex cover in graph theory. This protocol outperforms the aforementioned 4-approximation in a number of cases. To demonstrate its applicability, we apply it to obtain optimal or asymptotically optimal protocols for several natural network topologies such as cycles, hypercubes, and grids. On the way we also provide new bounds of independent interest on the size of total vertex covers in regular graphs.
\end{abstract}

\maketitle

\section{Introduction}

We consider the \emph{multiparty equality problem} in graphs, where every vertex of a graph $G$ is given a $k$-bit input, and the goal of the vertices is to decide whether all inputs are the same. Each vertex can communicate with its neighbors in the graph, and the cost of the protocol is the total number of bits transmitted during the communication phase. After this phase, each vertex either accepts or rejects the instance; if every vertex was assigned the same input, then all vertices must accept the instance, and if two of the inputs differ then at least one vertex must reject the instance.  The \emph{complexity} of the multiparty equality problem is the minimum cost of a protocol solving the problem. Note that we only consider protocols that are deterministic and \emph{static} \cite{alon2017testing,khan2021testing,LV11}, in the sense that the set of vertices sending messages and the size of their messages is independent of their input strings (they only depend on $G$ and $k$). One the other hand, the content of the messages is allowed to depend on the input strings. See \cite{alon2017testing} for a discussion on non-static protocols.

\smallskip

In the classical \emph{point-to-point} communication model, each vertex communicates with its neighbors on different channels (one channel per vertex), so that sending the same message of $\ell$ bits to $d$ neighbors costs $d\ell$ bits of communication. A natural lower bound on the complexity of the multiparty equality problem in this model can be obtained by observing that at least $k$ bits need to be sent through every cut of the graph in every protocol. As a consequence, as proved in \cite{CR15,alon2017testing}, the complexity is at least $k$ times the minimum fractional transversal of cuts in the graph (or by duality, the maximum fractional packing of cuts), see Section~\ref{sec:lp} for more details. Alon, Efremenko and Sudakov \cite{alon2017testing} showed that this linear programming lower bound can be attained asymptotically for a number of graph classes, including Hamiltonian graphs, and moreover they gave a protocol of cost within a factor $4/3$ of the optimal for every graph. To the best of our knowledge, it is unknown whether computing the optimal cost of a protocol can be done in polynomial time. 

\smallskip

Much less is known in the \emph{local broadcast} communication model, which is the main focus of this paper. In this model, each message sent by a vertex $v$ is transmitted to \emph{all} the neighbors of $v$, and the number of bits of this message is counted only once in the complexity of the protocol. The \emph{total cost} of a protocol $\Pi$ on $G$, denoted $\cost_\Pi(G,k)$, is the sum of the number of bits broadcast by each vertex. The \emph{total complexity} (minimum cost of a protocol solving the problem) of the multiparty equality problem in the local broadcast communication model  on $G$ with $k$ bit inputs is denoted by $\opt(G,k)$. We will be mostly interested in the \emph{per-bit complexity} $\opt(G)=\lim_{k\to \infty} \tfrac1k \opt(G,k)$, whose existence directly follows from Fekete's subadditive lemma \cite{Fekete}. Similarly, we define the \emph{per-bit cost} of a protocol $\Pi$ as $\cost_{\Pi}(G)=\lim_{k\to \infty} \tfrac1k \cost_{\Pi}(G,k)$ (this limit does not exist for all protocols, but it does in all the protocols we consider in the paper). 
As in the point-to-point communication model, a simple linear programming lower bound on the complexity of the problem can be obtained by noting that for every cut, the sum of the number of bits broadcast by the vertices incident to the cut has to be at least $k$. This implies that a natural lower bound on $\opt(G)$ is the maximum fractional packing (or minimum fractional transversal) of boundaries in the graph $G$, where a \emph{boundary} is the set of vertices incident to the edges of a given cut in the graph. 

\smallskip

In the definition of a protocol for equality above, we have required that when two inputs differ, this is detected by at least one vertex. We note that when focusing on the per-bit complexity $\opt(G)$ or the per-bit cost of a protocol, it would be equivalent to ask the stronger requirement that at then end of the protocol, all vertices know whether they have the same input or not. This is because if two inputs differ, then some vertex will detect it, and this vertex can then communicate this information to all vertices along a spanning tree, costing a number of bits that depends only on $G$, not on the length $k$ of the input. The contribution of this  additional constant cost in the per-bit complexity  vanishes as $k\to \infty$.

\smallskip

The complexity of multiparty equality in the local broadcast model was studied by Khan and Vaidya  \cite{khan2021testing}. 
They first studied so-called \emph{simple protocols}, where each vertex either broadcasts its entire input, or remains inactive. Each vertex then checks that the inputs received from its neighbors match its own input, and accepts the instance if and only if this is the case. They related such protocols to the notion of a weakly connected dominating set. A \emph{weakly connected dominating set} $S$ in a graph  $G$, is a subset of the vertices of $G$ such that the set of edges incident to $S$ induces a spanning and connected subgraph of $G$. The minimum size of a weakly connected dominating set in $G$ is denoted by $\wds(G)$. 

\smallskip

Khan and Vaidya   proved the following characterization of simple protocols \cite{khan2021testing}. 

\begin{theorem}[\cite{khan2021testing}]\label{thm:kvsimple}
A simple protocol solves the multiparty equality problem if and only if the set $S \subseteq V$ of vertices chosen to transmit their entire input is a weakly connected dominating set of $G$. In particular, $\opt(G)\le \wds(G)$.
\end{theorem}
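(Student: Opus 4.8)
The plan is to reformulate the behavior of a simple protocol $\Pi$ associated to a set $S$ in terms of the subgraph $H=(V, E_S)$ of $G$, where $E_S$ denotes the set of edges of $G$ incident to $S$ and $H$ is taken on the \emph{full} vertex set $V$. The central step is the following observation: on an input $x\colon V\to\{0,1\}^k$, a vertex $v$ rejects precisely when it hears from some active neighbor $u\in S$ a string $x(u)\neq x(v)$; hence some vertex rejects if and only if there is an edge $uv\in E_S$ with $x(u)\neq x(v)$. Equivalently, \emph{all} vertices accept if and only if $x$ is constant on every connected component of $H$. This reduces the whole statement to comparing two conditions on inputs: ``$x$ is constant on each component of $H$'' versus ``$x$ is globally constant''.

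With this in hand, both directions follow quickly. For the ``if'' direction, suppose $S$ is a weakly connected dominating set, so that $H$ is connected and spanning, i.e.\ it consists of a single component covering all of $V$. Then ``$x$ constant on each component of $H$'' coincides with ``$x$ constant on all of $V$'', so the protocol accepts exactly the all-equal inputs; combined with the trivial completeness direction (an all-equal input is always accepted, since every vertex hears only copies of its own value), this shows $\Pi$ solves multiparty equality.

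For the ``only if'' direction I would argue by contraposition: if $S$ is not a weakly connected dominating set, then $H$ fails to be connected-and-spanning, so it has at least two connected components $C_1,C_2$. Assigning one input value to $C_1$ and a different value to all remaining vertices yields an input that is constant on every component of $H$, hence accepted, yet not globally constant. This exhibits a false accept, so $\Pi$ does not solve the problem. The one point requiring care, and the main (if modest) obstacle, is the interface between the ``spanning'' and ``connected'' halves of the weakly-connected-dominating condition: I must confirm that a vertex which is neither in $S$ nor adjacent to $S$ (a violation of spanning) really does form its own component of $H$ on vertex set $V$, so that the two-component construction handles the non-spanning and the genuinely-disconnected failure modes uniformly.

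Finally, the inequality $\opt(G)\le\wds(G)$ is immediate: taking $S$ to be a minimum weakly connected dominating set gives, by the first part, a valid simple protocol $\Pi$ in which exactly $|S|=\wds(G)$ vertices each broadcast their $k$-bit input, so $\cost_\Pi(G,k)=\wds(G)\cdot k$ and therefore $\opt(G,k)\le\wds(G)\cdot k$; dividing by $k$ and letting $k\to\infty$ gives the claim.
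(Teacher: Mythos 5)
Your proof is correct: the reformulation via the subgraph $H=(V,E_S)$ (all vertices accept iff the input is constant on each component of $H$, with non-dominated vertices correctly handled as isolated components) gives both directions cleanly, and the derivation of $\opt(G)\le\wds(G)$ is sound. Note that the paper itself does not prove this statement but quotes it from Khan and Vaidya \cite{khan2021testing}; your argument is the natural one and matches the component-wise characterization underlying their proof, so there is nothing further to flag.
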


Let us denote by $\Pi_0$ the simple protocol of Theorem~\ref{thm:kvsimple}, of per-bit cost $\cost_{\Pi_0}(G)=\wds(G)$. The authors of \cite{khan2021testing}  use the result above to provide examples where the multiplicative gap between the minimum cost of a simple protocol and $\opt$ is of order $\Omega(\log n)$ (here and  in the remainder of the paper, $n$ stands for the number of vertices of the graph under consideration). Khan and Vaidya then studied general (non-simple) protocols, and designed a protocol $\Pi_1$ which is within factor 4 of the optimal.

\begin{theorem}[\cite{khan2021testing}]\label{thm:kv}
For any graph $G$ and integer $k$, there is a protocol $\Pi_1$ for multiparty equality in the local broadcast model whose total cost is at most $4k$ times the maximum fractional packing of boundaries, and in particular at most $4\opt(G,k)$.
\end{theorem}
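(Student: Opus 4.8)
The plan is to keep the lower bound as given—namely $\opt(G,k)\ge \tau^* k$, where $\tau^*$ denotes the maximum fractional packing of boundaries—and to spend all the work on the matching upper bound, exhibiting a protocol $\Pi_1$ of total cost at most $4\tau^* k$. By LP duality $\tau^*$ also equals the minimum fractional transversal of boundaries, i.e. the value of $\min\sum_v y_v$ subject to $\sum_{v\in\partial C}y_v\ge 1$ over all cuts $C$, and I would design the protocol from an optimal such $y$. It is worth recording the integral picture first: a set $S$ meets the boundary of every cut if and only if the edges incident to $S$ span and connect $G$, i.e. $S$ is a weakly connected dominating set, so $\tau^*$ is exactly the fractional relaxation of $\wds(G)$. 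Since the integrality gap of this relaxation can be $\Omega(\log n)$, rounding $y$ to an integral transversal and invoking the simple protocol of Theorem~\ref{thm:kvsimple} is hopeless; the protocol has to make genuine use of partial broadcasts.

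I would work in the framework where each vertex $v$ broadcasts a fixed $GF(2)$-linear sketch $M_v x_v$ of length $\ell_v$ of its input, and then checks, for each neighbour $u$, whether the received $M_u x_u$ is consistent with its own $x_v$, i.e. whether $M_u(x_u\oplus x_v)=0$. Writing $d_{uv}=x_u\oplus x_v$, a non-equal input is wrongly accepted exactly when there is a nonzero coboundary $d$ with $d_{uv}\in \ker M_u\cap\ker M_v$ for every edge $uv$; so correctness is precisely the statement that the only such coboundary is $d=0$. The cheapest way to force this through an edge-by-edge condition, demanding $\ker M_u\cap\ker M_v=0$, requires $\ell_u+\ell_v\ge k$ along a connected spanning subgraph, and, after optimising the lengths and the subgraph, costs exactly $\wds(G)\,k$—no better than the simple protocol. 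The point of a non-simple protocol is therefore to tolerate $\ker M_u\cap\ker M_v\neq 0$ on individual edges and to let the constraints \emph{chain}: a single disagreement $d\neq 0$ crosses the boundaries of many cuts at once, and because one broadcast of $v$ is received by all its neighbours, its $\ell_v$ bits can help eliminate $d$ across all of those cuts simultaneously. This sharing, special to the local broadcast model, is what can bring the cost down to a constant times $\tau^*$.

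The main obstacle is thus to convert the fractional optimum $y$ into sketch lengths $\ell_v$ and matrices $M_v$ so that the \emph{global} propagation along the graph forces $d=0$, while keeping $\sum_v\ell_v\le 4\tau^* k$. Concretely I would start from a half-integral optimal $y$ (losing a factor $2$), read off from it a weighted connected spanning structure carrying the constraints, and choose the $M_v$ as nested blocks of a single MDS/Vandermonde generator so that, following any path of the structure, the accumulated kernels intersect down to $0$; a further factor $2$ should be the price of this chaining, yielding $\sum_v \ell_v\le 4\tau^* k$. Once the lengths are fixed, the cost estimate is a routine rounding computation, and the final inequality $\cost_{\Pi_1}(G,k)\le 4\opt(G,k)$ follows by combining the upper bound $4\tau^* k$ with the lower bound $\opt(G,k)\ge\tau^* k$ recalled at the outset. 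I expect the delicate point to be proving that the chaining really drives every nonzero coboundary out—that is, that the slack afforded by the constant is enough to guarantee detection at some single vertex rather than only in aggregate.
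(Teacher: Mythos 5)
The statement you are proving is quoted in this paper from Khan and Vaidya \cite{khan2021testing} without proof, so there is no in-paper argument to match; judged on its own terms, though, your proposal has a genuine gap: it defers exactly the step that constitutes the theorem. Your setup is sound --- the silent-edge condition $d_{uv}\in\ker M_u\cap\ker M_v$ for a coboundary $d$, the observation that integral boundary transversals are precisely weakly connected dominating sets, and the final combination with the lower bound $\opt(G,k)\ge\tau^{*}k$ are all correct. But the heart of the matter --- converting a fractional transversal $y$ into lengths $\ell_v$ and globally coordinated matrices $M_v$ so that every nonzero coboundary is detected at some \emph{single} vertex --- is only gestured at (``nested blocks of a single MDS/Vandermonde generator'', ``a further factor 2 should be the price of this chaining''), and you yourself flag it as the delicate point. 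The LP constraints give only aggregate capacity per cut, namely $\sum_{v\in B(S)}\ell_v\gtrsim k$ after scaling; in the one-shot linear-sketch model, a cut whose boundary weight is spread over many low-weight vertices can have every crossing edge silent even though the aggregate capacity exceeds $k$, unless the kernels are coordinated across all (exponentially many) cuts simultaneously. A single Vandermonde generator coordinates kernels along one linear order, which plausibly handles a path-like chain, but nothing in the proposal shows this works for all cuts of an arbitrary graph at once --- and that is the entire content of the theorem.

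There is a second concrete gap at the very first step: the claim that one may ``start from a half-integral optimal $y$ (losing a factor 2)'' is unsupported. The boundary-transversal LP has exponentially many constraints over boundaries of arbitrary cuts; no half-integrality theorem (in the style of Nemhauser--Trotter for vertex cover) is known or proved for it, and naive rounding to half-integral values can blow up the cost by an unbounded factor when many vertices carry small weights (raising tiny weights to $\tfrac12$ multiplies them arbitrarily). Since both factors of $2$ in your $4=2\times 2$ accounting rest on these two unproven steps, the bound $\sum_v\ell_v\le 4\tau^{*}k$ is asserted rather than derived. For comparison, the cited protocol of Khan and Vaidya is not a simultaneous linear-sketch scheme: it is constructed sequentially from the LP solution (the present paper remarks that its outcome depends on the order in which vertices are chosen), with a merging/charging analysis yielding the factor $4$. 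Your restriction to one-shot linear sketches therefore carries an additional, undischarged burden: that a $4$-approximation is achievable at all in this more rigid form.
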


% computing the fractional lower bound can be done in polytime, because the separation oracle for solving the LP is equivalent to minimizing a submodular function ($n-1$ times), which can be done in polynomial time.

We note that it appears to be unknown whether $\opt(G)$ and $\opt(G,k)$ can be computed in polynomial time.

\subsection*{Our results} In this paper, we study the multiparty equality problem in the local broadcast model through the lens of network topology (or equivalently, graph classes).

\smallskip

We first provide a couple of interesting applications of  Theorem~\ref{thm:kvsimple}: an optimal protocol for trees, and an asymptotically optimal protocol for hypercubes.

\smallskip

We then design a new protocol in 2-connected graphs, which outperforms Theorem~\ref{thm:kv} on a number of natural graph classes. Our protocol uses ideas introduced by Alon, Efremenko and Sudakov \cite{alon2017testing} in the point-to-point model, relying on classical constructions in extremal combinatorics. Recall that a \emph{vertex cover} in a graph $G$ is a subset of vertices intersecting every edge of $G$, and a \emph{total dominating set} in $G$ is a subset $X$ of vertices such that every vertex in $G$ has a neighbor in $X$. The efficiency of our protocol depends on the minimum size of a \emph{total vertex cover} in the graph $G$, denoted by $\tvc(G)$, where a total vertex cover is a subset of vertices which is both a vertex cover and a total dominating set. We prove the following result.

\begin{theorem}\label{thm:tvc}
For any 2-connected graph $G$, there is a protocol $\Pi_2$ for multiparty equality in the local broadcast model whose per-bit cost is at most $\tfrac12 \, \tvc(G)$, and in particular $\opt(G)\le \tfrac12\, \tvc(G)$.
\end{theorem}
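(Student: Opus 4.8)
The plan is to take $S$ to be a minimum total vertex cover of $G$, so that $|S|=\tvc(G)$, and to let exactly the vertices of $S$ be active (the vertices of $V\setminus S$ broadcast nothing). Each active vertex will transmit only \emph{half} of the information it would send in the simple protocol $\Pi_0$: formally I would view each $k$-bit input as an element of $\mathbb{F}^2$ for a field $\mathbb{F}$ of size $2^{k/2}$, and have each $s\in S$ broadcast a single well-chosen linear functional $\phi_s(x_s)\in\mathbb{F}$ of its input, i.e.\ $k/2$ bits. This immediately gives the cost bound $\cost_{\Pi_2}(G)\le \tfrac12|S|=\tfrac12\tvc(G)$, and hence $\opt(G)\le\tfrac12\tvc(G)$; completeness (all vertices accept when the inputs coincide) is automatic. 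The whole difficulty is therefore soundness: I must choose the functionals and the local acceptance rules so that, whenever two inputs differ, some vertex rejects.

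\textbf{Why a total vertex cover, and where $2$-connectivity enters.} Two features of $S$ are used. Since $S$ is a \emph{vertex cover}, every vertex of $V\setminus S$ has all of its neighbours in $S$, so it hears enough functionals to pin down its own input against the network; equivalently every edge, and hence every cut, is monitored by an active vertex. Since $S$ is a \emph{total dominating set}, the induced graph $G[S]$ has no isolated vertex, so each active vertex has an active partner with which it can exchange the two complementary halves of the input (this is the analogue, at the level of a single edge $uv$, of the exchange in which $u$ certifies the first half and $v$ the second, so that $u=v$ is fully verified while each sends only $k/2$ bits). To make these local certifications chain up into a \emph{global} one I would follow Alon--Efremenko--Sudakov and pick the functionals $\phi_s$ from an explicit family in general position (a Reed--Solomon-type construction), and route the two halves of the inputs along the graph in two complementary ``directions''. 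This is exactly the step where $2$-connectivity is essential: by Menger's theorem (or an open ear decomposition) one can orient and route consistently so that the two ``half-verification'' structures are each connected and spanning, with no cut vertex able to block one of the two flows.

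\textbf{Main obstacle.} The crux is establishing soundness: I must prove that the collection of local checks forces \emph{all} inputs to be equal, not merely equal in blocks. Concretely this reduces to showing that two auxiliary verification graphs (one per half) are connected and spanning. I expect this to be the hard and delicate part, because the naive choice in which each active vertex broadcasts a \emph{fixed} half already fails: the resulting verification graphs fall apart into components, and a dimension count shows that no single simultaneous broadcast of half-length messages can pin down all $n$ inputs. The construction must therefore genuinely \emph{propagate} information across the $2$-connected structure, letting each half meet itself after travelling along two internally disjoint routes; the algebraic gadget of Alon--Efremenko--Sudakov is what guarantees that the half-length messages still detect any discrepancy across every cut. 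Verifying that the two routed flows cover and connect all of $V$, and that the general-position property of the $\phi_s$ makes every potential discrepancy detectable, is the main content I would need to carry out in full.
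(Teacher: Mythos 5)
Your skeleton matches the paper's: activate exactly a minimum total vertex cover $S$, have each active vertex send $k/2+o(k)$ bits about its own input, use the vertex-cover property so every edge is monitored and the total-domination property so every vertex has an active neighbour, and conclude $\cost_{\Pi_2}(G)\le \tfrac12\,\tvc(G)$. But, as you say yourself, the entire content is soundness, and the mechanism you sketch for it does not work. A scheme in which each $s\in S$ broadcasts a fixed linear functional $\phi_s(x_s)\in\mathbb{F}$ (with $|\mathbb{F}|=2^{k/2}$) and vertices perform local linear consistency checks fails already on the cycle $C_n$ with the standard total vertex cover (pattern active--active--inactive): an inactive vertex $u$ sitting between two active pairs holding inputs $a\neq b$ hears only $\phi_{s_1}(a)$ and $\phi_{s_2}(b)$, and the adversary can give $u$ the unique input in $\left(a+\ker\phi_{s_1}\right)\cap\left(b+\ker\phi_{s_2}\right)$, after which every check in the graph passes. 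No choice of functionals ``in general position'' avoids this: linear local checks always leave spurious affine solutions off the diagonal. Your proposed repair --- ``routing'' or ``propagating'' the two halves along two internally disjoint paths via Menger --- is not available either, because in a one-shot protocol where each active vertex's message is a function of its own input alone, nothing propagates; making later messages depend on received ones would cost extra bits and destroy the $\tfrac{k}{2}$ bound.

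The paper's actual soundness mechanism is combinatorial and nonlinear, and this is the idea missing from your proposal. It invokes the faithful-host lemma of Alon--Efremenko--Sudakov (Lemma~\ref{lem:alon}): for the $2$-connected graph $G$ there is a host graph $F$ with classes $U_1,\dots,U_n$ of size $nm$ whose edge set partitions into at least $2^k$ edge-disjoint \emph{special copies} of $G$, and which contains \emph{no other} special copies. Each input word $w$ is injectively assigned a copy $G_w$, and a vertex broadcasts its \emph{identity}, the index in $\intv{1}{nm}$ of its copy's representative in its class --- a nonlinear function of the input costing only $\log(nm)=\tfrac{k}{2}+O(\sqrt{k\log n})$ bits, because roughly $m^2$ copies live over identity range $nm$. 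The checks are combinatorial (the received identity must form an edge of $F$ lying in the receiver's own copy); the vertex-cover property forces all identities to assemble into a special copy $G'$, faithfulness forces $G'=G_w$ for a single $w$, and total domination plus edge-disjointness of the copies forces every vertex's input to equal $w$. Note also that $2$-connectivity enters only as the hypothesis of Lemma~\ref{lem:alon} (whose construction rests on Behrend's dense progression-free sets), not through any Menger-type flow argument, so your account of where $2$-connectivity is used is also inaccurate.
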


We emphasize that only the vertices of a total vertex cover send messages in the protocol $\Pi_2$, as opposed to the protocol of \cite{alon2017testing} whose correctness relies on the fact that every vertex communicates. 

\medskip

Theorem~\ref{thm:tvc} motivates the study of total vertex covers in graphs, for which we obtain new results which might be of independent interest. 
Our first application concerns the class of cycles. The linear programming lower bound on $\opt(C_n)$ is of order $n/3$ (assigning weight $\tfrac13$ to each vertex). On the other hand, as $\wds(C_n)=\lfloor n/2\rfloor$, the simple protocol of Theorem~\ref{thm:kvsimple} has per-bit cost $\lfloor n/2\rfloor$, and it can be checked that the protocol of Theorem~\ref{thm:kv} has per-bit cost $\tfrac23n-\tfrac23$ or $\tfrac23n-\tfrac13$, depending on the parity of $n$. 
As an immediate application of Theorem~\ref{thm:tvc}, we obtain an almost optimal protocol for cycles.

\begin{corollary} For any integer $n\ge 3$, $\tfrac{n}3\le \opt(C_n)\le \tfrac{n+1}3$, and for every integer $n\equiv 0 \pmod 3$,  $\opt(C_n)=\tfrac{n}3$.
\end{corollary}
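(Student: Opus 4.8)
The plan is to combine the lower bound coming from the linear-programming relaxation with the upper bound provided by Theorem~\ref{thm:tvc}, the whole problem reducing to an exact evaluation of $\tvc(C_n)$.

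For the lower bound $\opt(C_n)\ge\tfrac n3$, I would exhibit an explicit fractional packing of boundaries of value $\tfrac n3$. For each vertex $v$, consider the cut isolating $v$ from the rest of the cycle, whose boundary consists of $v$ together with its two neighbours. Assigning weight $\tfrac13$ to each of these $n$ cuts, every vertex $w$ lies in the boundary of exactly three of them---those isolating $w$ and its two neighbours---so each packing constraint holds (indeed with equality). The total value of the packing is $n\cdot\tfrac13=\tfrac n3$, which yields $\opt(C_n)\ge\tfrac n3$.

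For the upper bound, since $C_n$ is $2$-connected for $n\ge 3$, Theorem~\ref{thm:tvc} gives $\opt(C_n)\le\tfrac12\,\tvc(C_n)$, so it suffices to determine $\tvc(C_n)$. Here I would first describe the structure of a total vertex cover $S$ of a cycle. The vertex-cover condition forces the vertices outside $S$ to form an independent set, hence to appear as isolated singletons separated by blocks of consecutive $S$-vertices, and each vertex outside $S$ then automatically has both neighbours in $S$. The binding constraint is total domination for the vertices of $S$: a block of $S$-vertices of length $1$ would contain a vertex with no neighbour in $S$, so every block must have length at least $2$. Writing $t$ for the number of singleton gaps, there are $t$ blocks totalling $n-t$ vertices with each block of size at least $2$, whence $n-t\ge 2t$ and $t\le\lfloor n/3\rfloor$. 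Minimising $|S|=n-t$ amounts to maximising $t$, and this bound is attained, so $\tvc(C_n)=n-\lfloor n/3\rfloor=\lceil 2n/3\rceil$.

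Finally I would plug this value into Theorem~\ref{thm:tvc} and check the three residues of $n$ modulo $3$. When $n\equiv 0\pmod 3$ we have $\tfrac12\lceil 2n/3\rceil=\tfrac n3$, which matches the lower bound and gives $\opt(C_n)=\tfrac n3$; when $n\equiv 1$ or $n\equiv 2\pmod 3$, a direct computation gives $\tfrac12\lceil 2n/3\rceil\le\tfrac{n+1}3$, establishing the general upper bound $\opt(C_n)\le\tfrac{n+1}3$. The only genuinely substantive step is the exact evaluation of $\tvc(C_n)$; both the packing argument and the final case check are routine once the structural description of total vertex covers of cycles is in hand.
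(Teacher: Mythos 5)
Your proposal is correct and follows essentially the same route as the paper: the lower bound is exactly the fractional packing of closed neighborhoods with weight $\tfrac13$ (the paper's Lemma~\ref{lem:deg} specialized to $2$-regular graphs), and the upper bound comes from a total vertex cover of $C_n$ plugged into Theorem~\ref{thm:tvc}. The only difference is cosmetic: you determine $\tvc(C_n)=\lceil 2n/3\rceil$ exactly (even yielding the marginally sharper bound $\tfrac{n}{3}+\tfrac12$ when $n\equiv 1 \pmod 3$), whereas the paper only exhibits a total vertex cover of size at most $\tfrac23(n+1)$, which suffices.
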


We then consider hypercubes, a well-studied graph topology. It is known that $d$-dimensional hypercubes $Q_d$ have weakly connected dominating sets of size matching asymptotically the linear programming lower bound $\frac{2^{d}}{d+1}$ \cite{Gri21}, and therefore the protocol $\Pi_0$ of Theorem~\ref{thm:kvsimple} is asymptotically best possible in this class. However the bounds on the size of weakly connected dominating sets depend on non-trivial results on the density of primes (even in the most simple case $d=2^\ell-1$). We give a simple and self-contained proof of an upper bound on $\tvc(Q_d)$ in this case, which directly implies that $\Pi_2$ is asymptotically optimal for these graphs as well.

\begin{corollary}\label{cor:hyper}
Let $\ell\ge 2$ be an integer and let $d=2^\ell-1$. Then the protocol $\Pi_2$ for equality in the hypercube $Q_d$ has per-bit cost at most \[\frac{2^{d}}{d+1}+\frac{2^{d/2}}{d+1}=\opt(Q_d)+O\left(\sqrt{\opt(Q_d)}\right).\]
\end{corollary}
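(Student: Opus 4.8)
Since $Q_d$ is $d$-connected, it is in particular $2$-connected for every $d\ge 2$, so the protocol $\Pi_2$ of Theorem~\ref{thm:tvc} applies to $Q_d$. The plan is to bound its per-bit cost on $Q_d$ by $\tfrac{2^{d}}{d+1}+\tfrac{2^{d/2}}{d+1}$ through a careful choice of the underlying structure, exploiting the arithmetic that becomes available when $d=2^\ell-1$: a total-dominating ``core'' of size $\tfrac{2^{d+1}}{d+1}$ coming from a perfect code, together with the lower-order overhead needed to meet the covering requirement. Once the per-bit bound $\tfrac{2^{d}}{d+1}+\tfrac{2^{d/2}}{d+1}$ is established, its rewriting as $\opt(Q_d)+O\!\left(\sqrt{\opt(Q_d)}\right)$ is purely formal: the introduction records the linear programming lower bound $\opt(Q_d)\ge \tfrac{2^{d}}{d+1}$, so the leading term is $\opt(Q_d)$ while the error $\tfrac{2^{d/2}}{d+1}$ is $O(\sqrt{\opt(Q_d)})$ because $\sqrt{\opt(Q_d)}\ge 2^{d/2}/\sqrt{d+1}$.

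The backbone of the construction is the Hamming code. Because $d=2^\ell-1$, the binary linear code $C\subseteq \mathbb{F}_2^{d}$ whose parity-check matrix lists all nonzero vectors of $\mathbb{F}_2^{\ell}$ is a perfect single-error-correcting code: it has dimension $d-\ell$, minimum distance $3$, and the radius-one balls around its $2^{\,d-\ell}=\tfrac{2^{d}}{d+1}$ codewords partition $\mathbb{F}_2^{d}=V(Q_d)$. Thus $C$ is a minimum dominating set, but not a total one, since the neighbours of a codeword are all non-codewords. To repair this I would translate $C$ by a weight-one vector $e$: as any nonzero difference of codewords has weight at least $3$, the sets $C$ and $C+e$ are disjoint, so $|C\cup(C+e)|=2\,|C|=\tfrac{2^{d+1}}{d+1}$, and a one-line verification shows total domination (a non-codeword is adjacent to its unique nearest codeword in $C$, and a codeword $c$ is adjacent to $c+e\in C+e$). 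Halving this size is exactly the main term $\tfrac{2^{d}}{d+1}$.

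The main obstacle, and the place where the hypothesis $d=2^\ell-1$ really earns its keep, is to account for the covering requirement of $\Pi_2$ on top of this total-dominating core while adding only the lower-order term $\tfrac{2^{d/2}}{d+1}$ to the per-bit cost. The obstructions are governed by the low-weight codewords of $C$---in particular the weight-three codewords, which under the parity-check description are exactly the triples of distinct nonzero vectors of $\mathbb{F}_2^{\ell}$ summing to zero---and the natural self-contained tool to control their number (and hence the overhead) is the weight enumerator of the Hamming code. By the MacWilliams identity its coefficients split into a binomial main term $\tfrac1{d+1}\binom{d}{\cdot}$ plus a correction driven by $(x+y)^{(d-1)/2}(x-y)^{(d+1)/2}$, whose coefficients are of order $2^{(d-1)/2}=\Theta(2^{d/2})$; this is precisely the origin of the claimed square-root error. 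I expect the delicate point to be turning this enumerator estimate into the bound $\tfrac{2^{d/2}}{d+1}$ on the overhead, using that enlarging the sending set can only help domination so that only its size and covering effect must be tracked. With that estimate in hand, Theorem~\ref{thm:tvc} yields the per-bit bound and the corollary follows.
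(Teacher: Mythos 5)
Your starting point coincides with the paper's: for $d=2^\ell-1$ the Hamming code $C_0$ together with its translate $C_1=C_0+e_1$ is exactly the core $S=C_0\cup C_1$ of size $\tfrac{2^{d+1}}{d+1}$ used in the paper's Theorem~\ref{thm:hypercube}, and halving it gives the main term; your derivation of the formal rewriting via $\opt(Q_d)\ge\tbl^*(Q_d)=\tfrac{2^d}{d+1}$ is also fine. But the plan breaks at what you call the ``covering requirement.'' Theorem~\ref{thm:tvc} charges $\tfrac12\,\tvc$, and a total vertex cover of $Q_d$ \emph{itself} must in particular be a vertex cover of $Q_d$; since $Q_d$ has a perfect matching, $\vc(Q_d)\ge 2^{d-1}$, so no lower-order repair of your core can ever meet the vertex-cover condition inside $Q_d$ --- the overhead would be exponential, not $O(2^{d/2})$. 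The essential move, which your proposal never makes, is Corollary~\ref{cor:tvc}: run $\Pi_2$ not on $Q_d$ but on a 2-connected \emph{spanning subgraph}, namely the graph $G_S$ of all edges incident to $S$. There $S$ is a vertex cover for free, and the entire difficulty shifts to showing that $G_S$ is spanning (easy: every vertex outside $S$ has exactly one neighbor in each of $C_0$ and $C_1$) and can be made 2-connected --- a condition your sketch does not address at all beyond noting that $Q_d$ is 2-connected, which is the wrong graph once one must pass to a subgraph.

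Your mechanism for the $\tfrac{2^{d/2}}{d+1}$ error term is also aimed at the wrong obstruction. In the paper, each component of $G_S$ is 2-connected (via a vertex-transitivity argument on $S$), and the $2^{d/2}$ term comes from bounding the \emph{number of components}: inside a component, the $C_0$-vertices form a coset of the subgroup generated by the weight-3 codewords $e_1+e_i+e_{\sigma(i)}$, whose span has dimension roughly $d/2$, so each component has at least about $(d+1)2^{d/2}$ vertices and there are at most $2^{d/2-\ell}$ components; these are then stitched together, preserving 2-connectivity, by adding two vertices per edge of a spanning tree of the component contraction (using that any cross edge $xy$ between components can be translated by $e_s+e_t$ to a second, disjoint cross edge). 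Your MacWilliams/weight-enumerator route counts the low-weight codewords, but their \emph{number} (which is only polynomial in $d$ for weight 3) controls nothing here --- what matters is the dimension of their span --- and your heuristic that the transform coefficients are of order $2^{d/2}$ is a numerical coincidence with the true bound, not a proof of it; you concede as much when you defer ``turning this enumerator estimate into the bound.'' So the proposal identifies the right code-theoretic core but is missing the two ideas that make the proof work: the passage to the subgraph $G_S$ via Corollary~\ref{cor:tvc}, and the component-dimension count plus 2-connected stitching that together produce the $\tfrac{2^{d/2}}{d+1}$ overhead.
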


We next consider the classical $n$ by $n$ square grid, which is denoted by $G_n$. The linear programming lower bound on $\opt(G_n)$ is of order $n^2/5$ (assigning weight $\tfrac15$ to each vertex), but a simple counting argument shows that $\wds(G_n)\ge n^2/4$, and thus simple protocols cannot be optimal. The protocol of Theorem~\ref{thm:kv} has many possible outcomes in a grid, depending on the order in which the vertices are chosen, and therefore it is not clear if it can be optimal for some well-chosen ordering. We have found some natural orderings for which the protocol of Theorem~\ref{thm:kv} has per-bit cost $\tfrac25\,n^2$, that is twice the linear programming lower bound. On the other hand, as a direct application of Theorem~\ref{thm:tvc}, we obtain an asymptotically optimal protocol for grids.

\begin{corollary} $ \opt(G_n)= (\tfrac{1}5+o(1))n^2$.
\end{corollary}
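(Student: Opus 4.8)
The plan is to prove the two estimates $\opt(G_n)\ge\tfrac15 n^2$ and $\opt(G_n)\le(\tfrac15+o(1))n^2$ separately. The first is a linear programming lower bound obtained from an explicit packing of boundaries, and the second is an application of Theorem~\ref{thm:tvc} to a carefully chosen total vertex cover of the grid.

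For the lower bound I would exhibit a feasible fractional packing of boundaries of total value $\tfrac15 n^2$. For each vertex $v$, take the cut $(\{v\},V(G_n)\setminus\{v\})$, whose boundary is the closed neighborhood $N[v]$, and assign it weight $\tfrac15$. Feasibility reduces to bounding the load at each vertex $u$: the boundaries containing $u$ are exactly those centered at the vertices of $N[u]$, so the load at $u$ is $\tfrac15\,|N[u]|\le\tfrac15\cdot5=1$, with equality for interior vertices (where $\deg u=4$) and strict slack for the $O(n)$ vertices on the sides and at the corners, where the closed neighborhood is smaller. Summing the weights over all $n^2$ cuts gives value $\tfrac15 n^2$, so by the LP lower bound recalled in the introduction we get $\opt(G_n)\ge\tfrac15 n^2$.

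For the upper bound, note first that $G_n$ is $2$-connected for $n\ge 2$, so Theorem~\ref{thm:tvc} applies and gives $\opt(G_n)\le\tfrac12\,\tvc(G_n)$. It therefore suffices to construct a total vertex cover $S$ of $G_n$ with $|S|\le(\tfrac25+o(1))n^2$. The plan is to look for a doubly periodic pattern on $\mathbb{Z}^2$ of the appropriate density, described by a small fundamental domain, verify its two defining properties (that it covers every edge, and that every vertex, including those of $S$, has a neighbor in $S$) locally on the quotient torus, and then restrict the pattern to the $n\times n$ grid. Only the cells meeting the border of the grid would need to be patched, which adds at most $O(n)=o(n^2)$ vertices; this gives $\tfrac12\,\tvc(G_n)\le(\tfrac15+o(1))n^2$, and combined with the lower bound yields $\opt(G_n)=(\tfrac15+o(1))n^2$.

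I expect the construction of the periodic total vertex cover to be the main obstacle. The vertex-cover requirement and the total-domination requirement constrain the pattern in opposite directions, so the delicate point is to pin down a periodic set that attains the density dictated by the lower bound while simultaneously satisfying both conditions, and then to argue that the border of the finite grid only perturbs the count by a lower-order term. The lower bound, by contrast, is a short self-contained duality computation.
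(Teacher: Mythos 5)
Your lower bound is fine and is exactly the paper's: your packing with weight $\tfrac15$ on each closed neighborhood $B(\{v\})=N[v]$ is the content of Lemma~\ref{lem:deg} with $\Delta=4$, combined with Theorem~\ref{thm:lb}. The upper bound, however, contains a fatal flaw: you propose to construct a total vertex cover $S$ of $G_n$ \emph{itself} with $|S|\le(\tfrac25+o(1))n^2$, and no such set exists. A total vertex cover is in particular a vertex cover, and since $G_n$ is bipartite with a near-perfect matching (it has a Hamiltonian path, hence a matching of size $\lfloor n^2/2\rfloor$), K\H{o}nig's theorem gives $\vc(G_n)=\lfloor n^2/2\rfloor$, so $\tvc(G_n)\ge\lfloor n^2/2\rfloor$. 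No periodic pattern of density $\tfrac25$ can cover all edges of the grid --- the tension you flagged between the vertex-cover and total-domination requirements is not merely delicate, it is unresolvable at that density. Applying Theorem~\ref{thm:tvc} directly to $G_n$ therefore yields only $\opt(G_n)\le\tfrac12\,\tvc(G_n)\approx\tfrac14 n^2$, which falls short of the claimed bound.

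The missing idea is to exploit the monotonicity built into Corollary~\ref{cor:tvc}: run the protocol not on $G_n$ but on a well-chosen \emph{2-connected spanning subgraph} $H$, so that $S$ only needs to cover the edges of $H$, not all grid edges. The paper takes $S$ to be the border vertices together with the vertices $(i,j)$ satisfying $i\in\{3j-2,3j-1\}\pmod 5$ (a doubly periodic pattern of density $\tfrac25$, much in the spirit of what you had in mind), and then defines $H=G_S$ as the subgraph induced by the edges incident to $S$. With this choice $S$ is a vertex cover of $G_S$ by construction; what must be verified is that $G_S$ is spanning and 2-connected and that every vertex (including those of $S$) has a neighbor in $S$, giving $\tvc(G_S)\le|S|\le\tfrac25 n^2+4n$ and hence per-bit cost at most $\tfrac15 n^2+2n$. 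So your overall architecture (LP duality below, a density-$\tfrac25$ periodic set above, with $O(n)$ border corrections) is right, but the periodic set must serve as a total vertex cover of the sparsified subgraph $G_S$, not of the full grid.
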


We then study $d$-regular graphs for $d\ge 3$. An interesting property of these classes is that although the classes are very large (containing roughly $2^{(d/2-1)n\log n}$ non-isomorphic $n$-vertex graphs), the linear programming lower bound in this case can be expressed easily as $\tfrac{n}{d+1}$, which allows to compare explicitly the quality of protocols on a large class of graphs by simply bounding the cost of protocols on the entire class (instead of comparing the cost of the protocol and the linear programming lower bound on every single graph in the class). 

\smallskip

It was proved in \cite{chen2017note} that every cubic graph on $n$ vertices has a total vertex cover on at most $3n/4$ vertices. We extend this result to every degree by proving that for $d\ge 3$,  every $d$-regular graph on $n$ vertices has a total vertex cover on at most $\tfrac{d}{d+1}\cdot n$ vertices. This is optimal for every $d$, since any total vertex cover of the complete graph $K_{d+1}$ (which is $d$-regular) contains at least $d$ vertices. It was proved in \cite{chen2017note} that there are two more extremal connected graphs in the case $d=3$, and all the other connected cubic graphs have a total vertex cover with less than $3n/4$ vertices. Our result below implies that for $d\ge 5$, complete graphs are the only extremal examples. 

\begin{theorem}\label{thm:tvcregular}
For every $d\ge 3$, every $d$-regular graph on $n$ vertices has a total vertex cover on at most $\frac{d}{d+1} \cdot n$ vertices. For $d\ge 5$, every connected $d$-regular graph on $n$ vertices distinct from $K_{d+1}$ has a total vertex cover on at most $\tfrac{d-\epsilon}{d+1}\cdot n$ vertices, with $\epsilon= \tfrac1{2d+1}$.
\end{theorem}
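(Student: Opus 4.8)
The plan is to work with the complementary formulation of total vertex covers. Observe first that a set $S\subseteq V(G)$ is a total vertex cover if and only if $I:=V(G)\setminus S$ is independent (so that $S$ covers every edge) and no vertex of $S$ has all its neighbours in $I$ (so that $G[S]$ has no isolated vertex, i.e.\ $S$ is a total dominating set). Thus $\tvc(G)=n-\alpha^{*}$, where $\alpha^{*}$ is the maximum size of an independent set $I$ such that every vertex outside $I$ has a neighbour outside $I$. Since $G$ is $d$-regular and $I$ is independent, every vertex of $I$ sends all its $d$ edges into $S$, so the number of edges between $I$ and $S$ equals $d|I|$. As $|S|=n-|I|$, the target inequality $|S|\le\tfrac{d}{d+1}n$ is equivalent to $d|I|\ge |S|$, that is, to $\sum_{v\in S}|N(v)\cap I|\ge |S|$: on average, each vertex of $S$ should have at least one neighbour in $I$.

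For the first statement I would fix a minimum total vertex cover $S$ and exploit its minimality. Let $S_0=\{v\in S: N(v)\cap I=\varnothing\}$ be the vertices of $S$ with no neighbour in $I$; these are exactly the vertices contributing $0$ to the sum above. For $v\in S_0$, the set $S\setminus\{v\}$ is still a vertex cover (removing $v$ uncovers only edges from $v$ to $I$, of which there are none), so by minimality it must fail to be a total dominating set. As $d\ge 3$ keeps both $v$ and every vertex of $I$ dominated, the only possibility is that some $w\in S$ has $v$ as its unique neighbour in $S$. Such a $w$ then has exactly $d-1\ge 2$ neighbours in $I$, and the assignment $v\mapsto w$ is injective (each such $w$ determines $v$ as its unique $S$-neighbour). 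Counting $d-1$ for each of the $|S_0|$ vertices $w$ produced this way, $1$ for the remaining vertices of $S\setminus S_0$, and $0$ for $S_0$, yields $\sum_{v\in S}|N(v)\cap I|\ge |S|+(d-3)|S_0|\ge |S|$ for every $d\ge 3$. Hence $d|I|\ge |S|$ and $\tvc(G)=|S|\le\tfrac{d}{d+1}n$, with equality forcing $S_0=\varnothing$ and every vertex of $S$ to have exactly one neighbour in $I$.

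For the refined bound I would again start from a minimum total vertex cover and measure the slack $d|I|-|S|=\sum_{v\in S}\bigl(|N(v)\cap I|-1\bigr)$, which the computation above bounds below by $(d-3)|S_0|$ and, more precisely, by the total surplus coming from vertices with at least two neighbours in $I$. Since $(d+1)|I|=n+(d|I|-|S|)$, a slack of at least $\tfrac{n}{2d+1}$ translates into $|S|\le\tfrac{2d-1}{2d+1}n=\tfrac{d-\epsilon}{d+1}n$ with $\epsilon=\tfrac1{2d+1}$, so the goal is to show that connectivity together with $G\ne K_{d+1}$ forces this much slack. When $|S_0|$ is a positive fraction of $n$ the term $(d-3)|S_0|$ already supplies linear slack (here one uses $d\ge 5$, so $d-3\ge 2$). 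The hard case is the near-tight regime, where almost every vertex of $S$ has exactly one neighbour in $I$: the graph then decomposes into near-cliques $\{u\}\cup N(u)$ (for $u\in I$) of order $d+1$, glued along the few remaining edges, and one must use that $G$ is connected and distinct from $K_{d+1}$ to locate, in linearly many of these blocks, either an internal non-edge or an edge leaving the block. Either configuration enables a local swap—deleting $u$ from $I$ and inserting two suitable non-adjacent vertices of $N(u)$—that enlarges $I$ while preserving validity.

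I expect the main obstacle to be precisely this last amortization. The local swaps in different blocks interact through the edges joining them: a vertex inserted into $I$ in one block may be adjacent to a vertex one wishes to insert in a neighbouring block, and a swap may also threaten to isolate a private neighbour in $G[S]$. One therefore cannot perform all swaps simultaneously, and the argument must show that a linear family of pairwise-compatible augmentations survives while the unavoidable losses cost only a bounded factor. Controlling these interactions is where the quantitative value of $\epsilon=\tfrac1{2d+1}$ is determined and where the bulk of the technical work will lie.
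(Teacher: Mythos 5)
Your complementary reformulation and your proof of the first bound are correct and essentially identical to the paper's argument: writing $S_i$ for the set of vertices of $S$ with exactly $i$ neighbours in $I$, your injective map $v\mapsto w$ from $S_0$ into the vertices with $d-1$ neighbours in $I$ is literally the paper's inequality $|S_0|\le |S_{d-1}|$, and your count $\sum_{v\in S}|N(v)\cap I|\ge |S|+(d-3)|S_0|$ is its identity $|S|=d|T|+|S_0|-\sum_{i\ge 1}(i-1)|S_i|$ after the same substitution. Your translation of the refined bound into a slack requirement (slack $d|I|-|S|\ge \tfrac{n}{2d+1}$ is equivalent to $|S|\le\tfrac{2d-1}{2d+1}n=\tfrac{d-\epsilon}{d+1}n$) is also correct.

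The genuine gap is the second statement, and the difficulty you forecast there is a mirage: no family of simultaneous swaps is ever performed, so the compatibility/amortization problem you expect to dominate the proof does not arise. The paper uses your swap exactly once, \emph{negatively}, as a contradiction with the minimality of $S$: if some $u\in I$ had two non-adjacent neighbours $v,w\in S_1$ (so $u$ is the unique $I$-neighbour of each), then $(S\setminus\{v,w\})\cup\{u\}$ would be a smaller total vertex cover; hence for every $u\in I$ the set $N(u)\cap S_1$ induces a clique. Since $N(u)\subseteq S\setminus S_0$, if $N(u)\subseteq S_1$ then $\{u\}\cup N(u)$ is a clique on $d+1$ vertices, and $d$-regularity plus connectedness forces $G=K_{d+1}$; so every $u\in I$ sends at least one edge to $S_{\ge 2}:=S_2\cup\dots\cup S_{d-1}$. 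Double counting these edges gives $|I|\le\sum_{i= 2}^{d-1} i|S_i|$, and since $i-1\ge i/2$ for $i\ge 2$ and $d-3\ge (d-1)/2$ exactly when $d\ge 5$, the slack $-|S_0|+\sum_{i\ge 2}(i-1)|S_i|\ge\sum_{i=2}^{d-2}(i-1)|S_i|+(d-3)|S_{d-1}|$ (absorbing $-|S_0|$ via $|S_0|\le|S_{d-1}|$) is at least $|I|/2$, i.e.\ $|S|\le (d-\tfrac12)|I|$. A trivial dichotomy finishes: if $|I|\ge\tfrac{2n}{2d+1}$ then $|S|=n-|I|\le\tfrac{2d-1}{2d+1}n$ directly, and otherwise $|S|\le(d-\tfrac12)|I|<\tfrac{2d-1}{2d+1}n$. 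This global counting step — not any analysis of interacting local augmentations — is the missing idea in your sketch; it is where $d\ge 5$ is actually used (your ``$d-3\ge 2$'' is not the relevant point) and it determines the value $\epsilon=\tfrac1{2d+1}$ in half a page.
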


Combining Theorem~\ref{thm:tvcregular} and Theorem~\ref{thm:tvc} for the upper bound, and using the linear programming lower bound of $\tfrac{n}{d+1}$ alluded to above, we immediately obtain the following.

\begin{corollary}\label{cor:d}
For any $d\ge 3$ and  any 2-connected $d$-regular graph  $G$, the protocol $\Pi_2$ of Theorem~\ref{thm:tvc} has per-bit cost at most $\frac{d}{2d+2}\cdot n = \tfrac{d}2\cdot \opt(G)$. If moreover,  $d\ge 5$ and $G\ne K_{d+1}$, then $\Pi_2$ has per-bit cost at most $\tfrac{d-\epsilon}2 \cdot \opt(G)$, for some $\epsilon>0$ depending only on $d$.
\end{corollary}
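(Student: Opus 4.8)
The plan is to simply chain together the three ingredients that the statement advertises: Theorem~\ref{thm:tvc}, Theorem~\ref{thm:tvcregular}, and the linear programming lower bound $\opt(G)\ge \tfrac{n}{d+1}$ for $d$-regular graphs. Since the first two are already available as displayed results, the only piece I would spell out is this lower bound, which holds for \emph{every} $d$-regular graph on $n\ge 2$ vertices (no connectivity needed). Assign weight $\tfrac1{d+1}$ to each of the $n$ single-vertex cuts. The boundary of the cut isolating a vertex $u$ is $\{u\}\cup N(u)$, of size $d+1$, so a fixed vertex $v$ lies in the boundary of exactly $d+1$ of these cuts: its own, together with those of its $d$ neighbors. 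The load on $v$ is therefore $(d+1)\cdot\tfrac1{d+1}=1$, so this is a feasible fractional packing of boundaries of total value $\tfrac{n}{d+1}$, whence $\opt(G)\ge \tfrac{n}{d+1}$.

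Next I would observe that since $G$ is $2$-connected it is in particular connected, so both branches of Theorem~\ref{thm:tvcregular} are applicable. This gives $\tvc(G)\le \tfrac{d}{d+1}n$ in general, and $\tvc(G)\le \tfrac{d-\epsilon}{d+1}n$ with $\epsilon=\tfrac1{2d+1}$ when $d\ge 5$ and $G\ne K_{d+1}$. Feeding these into Theorem~\ref{thm:tvc}, whose hypothesis of $2$-connectivity is exactly what we have assumed, yields $\cost_{\Pi_2}(G)\le \tfrac12\,\tvc(G)\le \tfrac{d}{2(d+1)}n$, respectively $\cost_{\Pi_2}(G)\le \tfrac{d-\epsilon}{2(d+1)}n$.

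Finally I would convert these absolute bounds into approximation ratios against the lower bound. From $\tfrac{n}{d+1}\le \opt(G)$, multiplying through by $\tfrac d2$ gives $\tfrac{d}{2(d+1)}n\le \tfrac d2\,\opt(G)$, hence $\cost_{\Pi_2}(G)\le \tfrac d2\,\opt(G)$, and the same computation with $d-\epsilon$ in place of $d$ gives the improved constant $\tfrac{d-\epsilon}2$. There is no real obstacle here, the corollary being a direct combination; the only points deserving care are to note that $2$-connectivity delivers connectedness so that the sharper branch of Theorem~\ref{thm:tvcregular} is legitimately invoked, and to read the displayed equality $\tfrac{d}{2d+2}n=\tfrac d2\,\opt(G)$ as the absolute bound $\tfrac{d}{2d+2}n$ together with its comparison to the lower bound $\tfrac{n}{d+1}$ on $\opt(G)$, rather than as an exact identity.
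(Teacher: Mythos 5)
Your proposal is correct and is exactly the paper's (implicit) argument: the corollary is presented there as an immediate combination of Theorem~\ref{thm:tvcregular} and Theorem~\ref{thm:tvc} with the lower bound $\opt(G)\ge \tbn^*(G)\ge \tbl^*(G)=\tfrac{n}{d+1}$ from Theorem~\ref{thm:lb} and Lemma~\ref{lem:deg}, whose proof is the same fractional packing of closed neighborhoods with weight $\tfrac1{d+1}$ that you spell out. Your two points of care --- that $2$-connectivity supplies the connectedness needed for the sharper branch of Theorem~\ref{thm:tvcregular}, and that the displayed ``equality'' $\tfrac{d}{2d+2}n=\tfrac d2\,\opt(G)$ is to be read as the absolute bound compared against the lower bound rather than an exact identity --- match the paper's intended reading.
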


Corollary~\ref{cor:d} improves on the 4-approximation of Theorem~\ref{thm:kv} for any degree $d\le 8$.

\subsection*{Organization of the paper}
In Section~\ref{sec:prel} we introduce the necessary notation, the linear programming lower bounds and two useful lemmas.
Section~\ref{sec:simple} is devoted to the study of simple protocols. In Section~\ref{sec:tvc} we introduce a new protocol for 2-connected graphs based on total vertex covers and provide several applications. Directions for future research are given in Section~\ref{sec:ccl}.

\section{Preliminaries}\label{sec:prel}

\subsection{Notation}
In this paper, $\log$ denotes the binary logarithm.
Let $G$ be a graph.
For two disjoint sets $S,S' \subseteq V(G)$, we denote by $E(S,S')$ the set of
edges of $G$ with the one endpoint in $S$ and the other one in $S'$, and the \emph{cut} defined by $S$ is $E(S,\bar{S})$, where $\bar{S}=V(G)\setminus S$.
The \emph{boundary} $B(S)$ of the cut defined by $S$ consists of all vertices
incident to $E(S, \bar{S})$.
Observe that for every $v\in V(G)$, the trivial boundary $B(\{v\})$  is precisely 
the closed neighborhood $N[v]$ of $v$ in $G$ (that is, $v$ together with its set of neighbors in $G$).

\smallskip

In the remainder we only consider connected graphs (otherwise the problem we consider has no solution, as vertices in different components cannot communicate to verify that their inputs are equal).
A graph $G$ is \emph{2-connected} if it has at least 3 vertices and $G$ remains connected after the removal of any vertex. 

\subsection{A linear programming lower bound}\label{sec:lp}

Consider the following covering linear program for boundaries.
\begin{tcolorbox}[title=\textbf{Program} $\Cbn(G)$]
  variables: $x(v)$ for every $v\in V(G)$

  function to minimize: $\sum\limits_{v\in V(G)} x(v)$
  \medskip
  
  subject to the constraints: $\left \{
  \begin{array}{l}
  \forall S\subsetneq V(G)\ \text{s.t.}\ S\neq\emptyset,\ \sum\limits_{v\in B(S)} x(v) \geq 1\\
    \forall v \in V(G),\ x(v) \geq 0.
\end{array}\right .$
\medskip

\textbf{Optimal value:} $\tbn^*(G)$
\end{tcolorbox}

The dual packing linear program is the following.

\begin{tcolorbox}[title=\textbf{Program} $\Pbn(G)$]
variables: $y(S)$ for every non-empty $S \subsetneq V(G)$
\medskip

maximize: $\sum\limits_{S\subsetneq V(G),\ S\ne \emptyset} y(S)$
\medskip

subject to: $\left \{
\begin{array}{l}
    \forall v\in V(G),\ \sum\limits_{S,\ v\in B(S)} y(S) \leq 1\\
    \forall S \subsetneq V(G)\ \text{s.t.}\ S\neq \emptyset,\ y(S) \geq 0 
\end{array}
\right .$
\medskip

\textbf{Optimal value:} $\nbn^*(G)$
\end{tcolorbox}

\medskip

Note that by linear programming duality, $\tbn^*(G)=\nbn^*(G)$ for any graph $G$. 
It will be also useful to consider the simple variants of the two linear programs above, $\Cbl(G)$  and $\Pbl(G)$, where instead of considering all boundaries $B(S)$, we only consider the trivial boundaries (of the form $B(\{v\})$, for some $v$). Let $\tbl^*(G)=\nbl^*(G)$ be the associated optimal values.

Observing that for any non-empty cut $E(S,\bar{S})$, at least $k$ bits in total have to be broadcast by the vertices incident to the cut, the following was proved in \cite{khan2021testing}.

\begin{theorem}[\cite{khan2021testing}]\label{thm:lb}
For any graph $G$ and integer $k$, $\opt(G,k)\ge \tbn^*(G) \cdot k$, and in particular $\opt(G)\ge \tbn^*(G)\ge \tbl^*(G) $.
\end{theorem}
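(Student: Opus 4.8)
The plan is to convert the informal observation preceding the theorem — that every cut must carry at least $k$ bits — into a precise combinatorial lower bound, and then read off the linear program $\Cbn(G)$. Concretely, I would first prove the following claim: for any protocol $\Pi$ solving multiparty equality on a (connected) graph $G$, any integer $k$, and any non-empty proper subset $S\subsetneq V(G)$, the boundary vertices jointly broadcast at least $k$ bits, i.e. $\sum_{v\in B(S)} c_v \ge k$, where $c_v$ denotes the number of bits broadcast by $v$ (this is input-independent since $\Pi$ is static). The intuition is a reduction to two-party equality: think of $S$ and $\bar{S}$ as two players who each simulate the broadcasts internal to their own side. By locality of the broadcast model, a vertex strictly inside $S$ is heard only within $S$, and likewise for $\bar{S}$, so the only messages crossing the cut $E(S,\bar{S})$ are those broadcast by the vertices of $B(S)$. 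Hence the two sides can decide whether their inputs agree using only $\sum_{v\in B(S)} c_v$ bits of cross-cut communication, and deciding equality of $k$-bit strings requires at least $k$ bits.

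To make this rigorous without formally invoking the two-party model, I would give a direct cut-and-paste (fooling set) argument. Restrict attention to the $2^k$ inputs in which every vertex of $S$ receives a common string $x$ and every vertex of $\bar{S}$ receives a common string $y$; write $\Pi(x,y)$ for the resulting run. Since $\Pi$ is static, the schedule and the message lengths are fixed, so the \emph{cut transcript} — the tuple of messages broadcast by the vertices of $B(S)$, indexed by vertex and round — has total length exactly $\sum_{v\in B(S)} c_v$ on every input. If this were less than $k$, there would be fewer than $2^k$ possible cut transcripts, so by pigeonhole two distinct strings $x\ne x'$ would make the runs $\Pi(x,x)$ and $\Pi(x',x')$ produce the same cut transcript. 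I would then show by induction on the rounds that in the mixed run $\Pi(x,x')$ every vertex of $S$ behaves exactly as in $\Pi(x,x)$, and every vertex of $\bar{S}$ behaves exactly as in $\Pi(x',x')$: a vertex of $S$ has input $x$ in both $\Pi(x,x')$ and $\Pi(x,x)$, hears its $S$-neighbors identically by the induction hypothesis, and hears each $\bar{S}$-neighbor (necessarily a vertex of $B(S)$) identically because that vertex's messages agree across $\Pi(x,x)$ and $\Pi(x',x')$ by the equal-transcript assumption; the case of $\bar{S}$ is symmetric. Consequently every vertex ends $\Pi(x,x')$ with the same local view, hence the same accept/reject decision, as in an all-equal run, so all vertices accept; but $x\ne x'$ means the inputs are not all equal, contradicting correctness. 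This forces $\sum_{v\in B(S)} c_v \ge k$.

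With the claim in hand, the linear program follows immediately. Setting $x(v)=c_v/k$ gives a feasible solution of $\Cbn(G)$: each constraint $\sum_{v\in B(S)} x(v)\ge 1$ is exactly the claim divided by $k$, and $x(v)\ge 0$ trivially. Its objective value is $\tfrac1k\sum_{v} c_v = \tfrac1k\cost_\Pi(G,k)$, so by optimality of $\tbn^*(G)$ we obtain $\cost_\Pi(G,k)\ge \tbn^*(G)\cdot k$; minimizing over all protocols $\Pi$ yields $\opt(G,k)\ge \tbn^*(G)\cdot k$. Dividing by $k$ and letting $k\to\infty$ gives $\opt(G)\ge \tbn^*(G)$, and finally $\tbn^*(G)\ge \tbl^*(G)$ because $\Cbl(G)$ is obtained from $\Cbn(G)$ by keeping only the constraints attached to the trivial boundaries $B(\{v\})=N[v]$, a relaxation that can only decrease the optimum.

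The main obstacle is the cut-and-paste step of the second paragraph. Two points require care: first, justifying that the messages of vertices strictly inside $S$ or $\bar{S}$ genuinely never influence the other side — this is precisely the locality of the broadcast model together with the definition of $B(S)$ as the set of vertices incident to $E(S,\bar{S})$; and second, that "same cut transcript" must be read round-by-round and vertex-by-vertex, which is legitimate exactly because $\Pi$ is static, so the transcript decomposes canonically into its per-vertex, per-round pieces (without staticness the pigeonhole count over transcripts could fail). Everything after the claim is routine bookkeeping with the linear program.
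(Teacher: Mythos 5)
Your proposal is correct and follows exactly the route the paper takes: the paper states this theorem as a citation of Khan and Vaidya, justified only by the one-line observation that every non-empty cut $E(S,\bar{S})$ forces the vertices of $B(S)$ to broadcast at least $k$ bits, which then feeds into feasibility for $\Cbn(G)$ precisely as in your last paragraph. Your cut-and-paste fooling argument (made legitimate by the protocol being deterministic and static, so the per-vertex, per-round transcript lengths are input-independent) is the standard and correct way to fill in the detail the paper delegates to the reference, and the round-by-round induction handling vertices of $B(S)$ on both sides of the cut is sound.
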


We also consider the integer parameters $\tbn(G)$, $\nbn(G)$, $\tbl(G)$, $\nbl(G)$, which are defined similarly as their fractional counterparts, but optimizing over the integers instead of the real (or rational) numbers. For instance, as $B(\{v\})=N[v]$ for every vertex $v$, $\tbl(G)$ is equal to the minimum size of a set $S$, such that $V(G)=\bigcup_{v\in S}N[v]$, or equivalently $\tbl(G)$ is the \emph{domination number} of $G$, the minimum size of a dominating set in $G$. Therefore, $\tbl^*(G)$ can be considered as the fractional domination number of $G$. The following simple result will be useful in the remainder of the paper.

\begin{lemma}\label{lem:deg}
If $G$ is an $n$-vertex graph with maximum degree $\Delta$, then $\tbn^*(G)\ge \tbl^*(G)\ge \tfrac{n}{\Delta+1}$. Moreover, if $G$ is $\Delta$-regular, then $\tbl^*(G)=\tfrac{n}{\Delta+1}$.
\end{lemma}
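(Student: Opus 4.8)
The plan is to treat the three claims separately, each reducing to a short manipulation of the two covering programs; the statement is a purely linear-programming fact about closed neighborhoods, so the only subtlety I anticipate is keeping straight the direction of optimization under inclusion of constraints, rather than any genuine obstacle.

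For the inequality $\tbn^*(G)\ge\tbl^*(G)$, I would use that the trivial boundaries $B(\{v\})=N[v]$ form a subfamily of the family of all boundaries $B(S)$. Consequently the constraints of $\Cbl(G)$ are a subset of those of $\Cbn(G)$, so every solution that is feasible for $\Cbn(G)$ (satisfying the constraint for every non-empty $S\subsetneq V(G)$) in particular satisfies all constraints of $\Cbl(G)$ and is thus feasible for it. Both programs minimize the same objective $\sum_{v}x(v)$, but the feasible region of $\Cbn(G)$ is contained in that of $\Cbl(G)$; minimizing over a smaller region can only increase the optimum, whence $\tbn^*(G)\ge\tbl^*(G)$.

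For the bound $\tbl^*(G)\ge \tfrac{n}{\Delta+1}$, I would produce a feasible solution of the dual packing program $\Pbl(G)$ and invoke weak duality. The natural candidate is the uniform assignment $y(\{v\})=\tfrac{1}{\Delta+1}$ for every $v\in V(G)$. Each dual constraint reads $\sum_{u:\,v\in N[u]}y(\{u\})\le 1$; since $v\in N[u]$ if and only if $u\in N[v]$, its left-hand side equals $\tfrac{|N[v]|}{\Delta+1}=\tfrac{d(v)+1}{\Delta+1}\le 1$, using $d(v)\le\Delta$. Hence $y$ is feasible with objective value $\tfrac{n}{\Delta+1}$, and by weak duality $\tbl^*(G)=\nbl^*(G)\ge\tfrac{n}{\Delta+1}$. (Equivalently, one may sum the $n$ primal constraints $\sum_{v\in N[u]}x(v)\ge1$ over all $u$ and regroup by $v$, obtaining $\sum_v(d(v)+1)x(v)\ge n$ and therefore $(\Delta+1)\sum_v x(v)\ge n$.)

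Finally, for the regular case I would exhibit a matching primal solution of $\Cbl(G)$. When $G$ is $\Delta$-regular we have $|N[v]|=\Delta+1$ for every $v$, so the uniform assignment $x(v)=\tfrac{1}{\Delta+1}$ satisfies $\sum_{v\in N[u]}x(v)=\tfrac{|N[u]|}{\Delta+1}=1$ for each $u$ and is therefore feasible, with objective $\tfrac{n}{\Delta+1}$. This yields $\tbl^*(G)\le\tfrac{n}{\Delta+1}$, which combined with the lower bound established above gives $\tbl^*(G)=\tfrac{n}{\Delta+1}$.
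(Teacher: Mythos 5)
Your proof is correct and takes essentially the same route as the paper: the lower bound comes from the uniform dual solution $y(\{v\})=\tfrac{1}{\Delta+1}$ to $\Pbl(G)$, and in the regular case the matching upper bound from the uniform primal solution $x(v)=\tfrac{1}{\Delta+1}$ to $\Cbl(G)$. Your explicit justification of $\tbn^*(G)\ge\tbl^*(G)$ via constraint inclusion and your feasibility checks merely spell out details the paper leaves implicit.
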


\begin{proof}
Setting $y(S)=\tfrac1{\Delta+1}$ for every vertex boundary $S$ of the form $S=B(\{v\})=N[v]$ for some $v\in V(G)$, we obtain a feasible solution to the program $\Pbl(G)$, and thus  $\tbl^*(G)=\nbl^*(G)\ge \tfrac{n}{\Delta+1}$. When $G$ is $\Delta$-regular, setting $x(v)=\tfrac1{\Delta+1}$ for every vertex $v\in V(G)$, we obtain a feasible solution to the program $\Cbl(G)$. It follows that $\tbl^*(G)\le \tfrac{n}{\Delta+1}$, and thus $\tbl^*(G)=\tfrac{n}{\Delta+1}$, as desired.
\end{proof}

We note that we will only use the first part of the statement, in combination with Theorem~\ref{thm:lb}.

\medskip

Finally, we will use the following well-known fact about 2-connected graphs.

\begin{lemma}\label{lem:2co}
Every 2-connected graph $G$ on $n$ vertices has a 2-connected spanning subgraph $H$ on at most $2n-3\le 2n$ edges. 
\end{lemma}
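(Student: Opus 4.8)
The plan is to use the open ear decomposition of $2$-connected graphs (Whitney's classical theorem): $G$ admits a sequence $P_0, P_1, \dots, P_k$ in which $P_0$ is a cycle, each $P_i$ with $i\ge 1$ is a path (an \emph{ear}) whose two endpoints lie in $P_0\cup\dots\cup P_{i-1}$ and whose internal vertices (if any) do not, and $G=P_0\cup\dots\cup P_k$. Call an ear \emph{trivial} if it is a single edge (no internal vertex) and \emph{nontrivial} otherwise. I would then take $H$ to be the union of $P_0$ together with all the nontrivial ears, discarding every trivial ear.

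First I would check that $H$ is spanning. Every vertex of $G$ is introduced either by the cycle $P_0$ or as an internal vertex of some ear; since trivial ears have no internal vertex, every vertex is in fact introduced by $P_0$ or by a nontrivial ear, and hence survives in $H$.

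Next I would argue that $H$ is $2$-connected. The key point is that discarding the trivial ears leaves a valid ear decomposition of $H$: processing the retained ears in their original relative order, the two endpoints of each retained ear $P_i$ were introduced before $P_i$, either in $P_0$ or as internal vertices of an earlier nontrivial ear, so they are present in the partial union built so far. Thus each step adds an ear to an already $2$-connected graph (starting from the cycle $P_0$), which preserves $2$-connectivity by the standard induction behind Whitney's theorem. The step requiring the most care is precisely this one — making sure that removing trivial ears never deletes an endpoint needed by a later ear — and it is exactly the observation that trivial ears contribute no vertices that makes it go through.

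Finally I would count edges. Writing $n_0$ for the number of vertices of $P_0$ and $m_1,\dots,m_t$ for the numbers of internal vertices of the $t$ retained (nontrivial) ears, each with $m_i\ge 1$, the cycle contributes $n_0$ edges and the $i$-th ear contributes $m_i+1$ edges, so
\[ |E(H)| = n_0 + \sum_{i=1}^{t}(m_i+1) = \Big(n_0 + \sum_{i=1}^{t} m_i\Big) + t = n + t, \]
using that $n = n_0 + \sum_i m_i$ since each vertex lies on $P_0$ or is internal to exactly one nontrivial ear. As $t \le \sum_i m_i = n - n_0$ and $n_0 \ge 3$, we obtain $t \le n-3$, and therefore $|E(H)| \le 2n-3 \le 2n$, as claimed.
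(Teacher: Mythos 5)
Your proof is correct and takes essentially the same route as the paper: an open ear decomposition \`a la Whitney, discarding the trivial single-edge ears, and an edge count in which the paper's ``adding $k$ new vertices adds $k+1\le 2k$ edges'' is exactly your $|E(H)|=n+t$ with $t\le n-n_0\le n-3$. Your explicit check that removing trivial ears leaves a valid open ear decomposition (since such ears introduce no vertices, so all later endpoints survive) is the one step the paper leaves implicit, and you handle it correctly.
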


\begin{proof}
Every 2-connected graph $G$ has an ear-decomposition such that every ear is open (the two endpoints of the ear are distinct), see \cite{Whi32}. Include in $H$ all the ears from the ear-decomposition, except those consisting of a single edge. Note that we start with a cycle (of length at least 3), and every time we add $k$ new vertices to $H$, we add $k+1\le 2k$ edges to $H$. Thus $H$ has at most $2(n-3)+3=2n-3$ edges.
\end{proof}

\section{Simple protocols}\label{sec:simple}

We recall that in \emph{simple protocols} for equality, each vertex either broadcasts its entire input, or does not send any bit of communication. As stated in Theorem~\ref{thm:kvsimple}, the set $S$ of vertices broadcasting their input in such a protocol is a \emph{weakly connected dominating set}, which implies that $\opt(G)\le \wds(G)$ for any graph $G$. In this section, we explore a number of interesting consequences of this result.

\medskip

Let $\vc(G)$ denote the minimum size of a \emph{vertex cover} of a graph $G$ (a set of vertices intersecting every edge of $G$). Note that $\wds(G)\le \vc(G)$. We obtain the following simple result.

\begin{theorem}\label{thm:trees}
For any tree $T$, the simple protocol in which every vertex of a vertex cover broadcasts its input is optimal among all protocols. In particular $\opt(T)=\vc(T)$.
\end{theorem}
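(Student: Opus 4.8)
The plan is to prove the two inequalities $\opt(T)\le \vc(T)$ and $\opt(T)\ge \vc(T)$ separately; the first is essentially immediate from the results already recalled, while the second, which certifies optimality, is where the work lies.

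For the upper bound, I would first check that a vertex cover $S$ of $T$ is a weakly connected dominating set: since $S$ meets every edge, the set of edges incident to $S$ is all of $E(T)$, and the spanning subgraph it induces is $T$ itself, which is connected. Hence Theorem~\ref{thm:kvsimple} applies, the simple protocol in which the vertices of a minimum vertex cover broadcast their input is valid, and its per-bit cost is $\vc(T)$, giving $\opt(T)\le \wds(T)\le \vc(T)$.

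For the lower bound I would invoke Theorem~\ref{thm:lb}, which gives $\opt(T)\ge \tbn^*(T)=\nbn^*(T)\ge \nbn(T)$, and then exhibit an integral boundary packing of value $\vc(T)$. The key structural observation is that in a tree, deleting a single edge $e=uv$ splits $T$ into exactly two components; taking $S_e$ to be one of them, the cut $E(S_e,\bar{S_e})$ consists of the single edge $e$, so its boundary is $B(S_e)=\{u,v\}$. Thus the boundaries of edge-cuts in a tree are exactly the pairs of endpoints of edges, and packing them amounts to packing edges so that no vertex is used twice — that is, a matching. Concretely, I would fix a maximum matching $M$ of $T$ and set $y(S_e)=1$ for each $e\in M$ and $y(S)=0$ otherwise. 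For a vertex $v$, the sets $S$ with $y(S)>0$ and $v\in B(S)$ correspond exactly to the matching edges incident to $v$, of which there is at most one; hence every packing constraint $\sum_{S:\,v\in B(S)} y(S)\le 1$ holds and $y$ is feasible for $\Pbn(T)$ of value $|M|$. Since $T$ is bipartite, König's theorem gives $|M|=\vc(T)$, so $\nbn(T)\ge \vc(T)$ and therefore $\opt(T)\ge \vc(T)$. Combining the two bounds yields $\opt(T)=\vc(T)$ and shows the vertex-cover protocol is optimal.

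The main obstacle is really the single observation that tree edge-cuts have two-vertex boundaries, which collapses the general boundary-packing program to ordinary matching; once this is in place, König's theorem and LP duality do the rest. One point to be careful about is that the lower bound must hold for $\opt(T)$ and not merely for the fractional value $\tbn^*(T)$, but this is exactly what Theorem~\ref{thm:lb} provides.
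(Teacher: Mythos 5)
Your proof is correct and follows essentially the same route as the paper: both rest on the observation that edge-cuts in a tree have two-vertex boundaries, combined with K\H{o}nig's theorem for the lower bound and the simple protocol of Theorem~\ref{thm:kvsimple} (via $\wds(T)\le\vc(T)$) for the upper bound. The only difference is cosmetic: the paper identifies $\tbn^*(T)$ with the fractional vertex-cover LP and uses its integrality in bipartite graphs, whereas you certify the lower bound from the dual side with an explicit integral matching packing --- a slightly more self-contained rendering of the same duality argument.
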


\begin{proof}
 Let $T$ be a tree. Since the minimal cuts of $T$ are single edges, $\tbn^*(T)$ coincides with the optimal solution of the linear relaxation of vertex cover in $T$. But since $T$ is bipartite, it follows from K\H{o}nig's theorem that this optimal solution is equal to $\vc(T)$. As there is a protocol for equality with per-bit cost $\wds(G)\le \vc(T)$, the per-bit complexity of equality in trees is precisely the vertex cover number.
 \end{proof}
 
 As observed in the previous section, $\tbl(G)$ is the domination number of $G$, and this can be used to connect $\tbl(G)$ and $\wds(G)$ as follows.
 
 \begin{lemma}\label{lem:wdstbl}
For any connected graph $G$, $\tbl(G) \leq \wds(G)\le 2\, \tbl(G)-1$.
\end{lemma}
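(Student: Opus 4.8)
The plan is to prove the two inequalities separately, the lower bound being essentially immediate and the upper bound requiring a short construction. Throughout I use, as recorded just before the statement, that $\tbl(G)$ is the domination number of $G$.

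For the lower bound $\tbl(G) \le \wds(G)$, I would first observe that any weakly connected dominating set $S$ is in particular a dominating set: by definition the edges incident to $S$ induce a spanning subgraph, so every vertex outside $S$ is an endpoint of such an edge, i.e.\ has a neighbour in $S$. Taking $S$ of minimum size then gives $\tbl(G) \le |S| = \wds(G)$.

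For the upper bound $\wds(G) \le 2\,\tbl(G) - 1$, I would start from a minimum dominating set $D$, so $|D| = \tbl(G)$, and consider the spanning subgraph $H$ of $G$ whose edges are exactly those incident to $D$. Because $D$ is dominating, $H$ is spanning, and the crucial observation is that every connected component of $H$ contains at least one vertex of $D$ (a vertex of $V(G)\setminus D$ is joined in $H$ only to its neighbours in $D$, of which it has at least one). Consequently $H$ has some number $c \le |D| = \tbl(G)$ of components. It then remains to connect these components cheaply. Since $G$ is connected, contracting each component of $H$ to a single vertex yields a connected graph on $c$ vertices; I would fix a spanning tree of it, giving $c-1$ edges of $G$, each joining two distinct components of $H$. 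Such a bridging edge cannot be incident to $D$ (otherwise it would already lie in $H$ and its endpoints would be in the same component), so both its endpoints lie outside $D$. For each of the $c-1$ bridging edges I would add one endpoint to $D$; this turns the edge into an edge incident to the enlarged set, hence into the corresponding weak subgraph, while keeping all original edges of $H$. The resulting set $S$ is dominating (it contains $D$) and its weak subgraph contains $H$ together with the $c-1$ bridging edges, which form a spanning tree of the components and therefore make the weak subgraph connected and spanning. Thus $S$ is weakly connected dominating and $\wds(G) \le |S| = |D| + (c-1) \le \tbl(G) + (\tbl(G)-1) = 2\,\tbl(G)-1$.

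The main point to verify carefully — the only place where the argument is more than bookkeeping — is that adding a single endpoint per bridging edge genuinely reconnects the components: one has to check that each added vertex stays attached to its old component (it does, via the edge to a dominator that dominated it) and that the chosen $c-1$ bridges, being a spanning tree of the contracted graph, suffice to merge all $c$ components into one. The bound $c \le \tbl(G)$ on the number of components is exactly what makes the final count come out to $2\,\tbl(G)-1$.
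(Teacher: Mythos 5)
Your proof is correct and follows essentially the same route as the paper's: both take a minimum dominating set $D$, form the spanning subgraph $H$ induced by the edges incident to $D$, bound its number of components by $|D|=\tbl(G)$, and then add one endpoint from each of at most $\tbl(G)-1$ bridging edges (which necessarily avoid $D$) to make the weak subgraph connected, yielding $\wds(G)\le 2\,\tbl(G)-1$. The only cosmetic difference is that the paper explicitly dispatches the trivial one-vertex graph, an edge case your argument silently skips, and your verification that the spanning tree of the contracted graph reconnects everything is just a more detailed spelling-out of the paper's ``it can be checked'' step.
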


\begin{proof}
Since a weakly dominating set is a dominating set, we have $\tbl(G) \leq \wds(G)$, so it remains to prove $\wds(G)\le 2\, \tbl(G)$.
Consider a dominating set $D$ of size $\tbl(G)$ in $G$. We assume that $G$ has more than one vertex otherwise the statement is trivially true. Let $H$ be the subgraph of $G$ induced by all edges that are incident to a vertex of $D$. Since $D$ is a dominating set, $H$ is a spanning subgraph of $G$ of minimum degree at least 1, and with at most $\tbl(G)$ connected components. Let $E'$ be a set of edges of $E(G)\setminus E(H)$ of minimum size such that the subgraph of $G$ induced by the edges of $E(H)\cup E'$ is connected. As $H$ has at most $\tbl(G)$ connected components, $|E'| \leq \tbl(G)-1$. Let $D'$ be a subset of vertices of $G$ obtained by doing the following for each edge $e\in E'$: select an endpoint of $e$ arbitrarily and add it to $D'$. Then $D\cup D'$ has size at most $2\,\tbl(G)-1$ and it can be checked that $D\cup D'$ is a weakly dominating set of $G$, as desired.
\end{proof}

Recall that $\tbl^*(G) \leq \opt(G)$.
In particular, for any graph $G$ for which $\tbl(G)$ is very close to $\tbl^*(G)$, combining Theorem~\ref{thm:kvsimple}  with Lemma~\ref{lem:wdstbl} immediately provides a good approximation of the optimal broadcast protocol.

\begin{corollary}\label{cor:simple}
For every graph $G$, there is a simple protocol for equality with per-bit cost at most $2\cdot \tfrac{\tbl(G)}{\tbl^*(G)}\cdot \opt(G)$.
\end{corollary}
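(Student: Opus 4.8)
The plan is to take the simple protocol $\Pi_0$ of Theorem~\ref{thm:kvsimple}, whose per-bit cost is exactly $\wds(G)$, and to verify that this single quantity is already bounded above by $2\cdot\tfrac{\tbl(G)}{\tbl^*(G)}\cdot\opt(G)$. No new protocol needs to be constructed; the corollary is a matter of chaining the two estimates proved earlier in this section and tracking the directions of the inequalities.

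First I would invoke Lemma~\ref{lem:wdstbl}, which gives $\wds(G)\le 2\,\tbl(G)-1\le 2\,\tbl(G)$. This reduces the task to controlling $\tbl(G)$ in terms of $\opt(G)$. The natural bridge is the fractional domination number $\tbl^*(G)$, which sits between $\tbl(G)$ and $\opt(G)$: indeed $\tbl^*(G)\le\tbl(G)$ by definition of the linear relaxation, and $\tbl^*(G)\le\opt(G)$ by Theorem~\ref{thm:lb}.

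Next I would rewrite $2\,\tbl(G)$ by inserting $\tbl^*(G)$ as a ratio,
\[
  2\,\tbl(G)=2\cdot\frac{\tbl(G)}{\tbl^*(G)}\cdot\tbl^*(G),
\]
and then replace the last factor $\tbl^*(G)$ by the larger quantity $\opt(G)$. Since the multiplier $2\,\tbl(G)/\tbl^*(G)$ is nonnegative (in fact at least $2$, as $\tbl(G)\ge\tbl^*(G)$), multiplying the relation $\tbl^*(G)\le\opt(G)$ by it preserves the inequality, and I obtain
\[
  \cost_{\Pi_0}(G)=\wds(G)\le 2\cdot\frac{\tbl(G)}{\tbl^*(G)}\cdot\opt(G),
\]
which is exactly the claimed bound.

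I do not anticipate any genuine obstacle here: the combinatorial content is entirely carried by Lemma~\ref{lem:wdstbl} and by the linear programming lower bound $\tbl^*(G)\le\opt(G)$ of Theorem~\ref{thm:lb}, both of which I may assume. The only care required is bookkeeping on the inequality directions, and in particular noting that $\tbl(G)/\tbl^*(G)\ge 1$ so that substituting $\opt(G)$ for $\tbl^*(G)$ weakens rather than breaks the bound. The sharper strict form arising from the ``$-1$'' in Lemma~\ref{lem:wdstbl} is not needed for the (non-strict) statement of the corollary.
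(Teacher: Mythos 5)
Your proof is correct and is essentially the argument the paper intends: the corollary is stated as an immediate combination of Theorem~\ref{thm:kvsimple} (per-bit cost $\wds(G)$), Lemma~\ref{lem:wdstbl} ($\wds(G)\le 2\,\tbl(G)-1$), and the lower bound $\tbl^*(G)\le\opt(G)$ from Theorem~\ref{thm:lb}, exactly as you chain them. Your extra bookkeeping (noting $\tbl^*(G)\le\tbl(G)$ and the positivity of the multiplier, with $\tbl^*(G)>0$ on connected graphs) is sound and matches the paper's implicit reasoning.
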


For instance, the $d$-dimensional hypercube $Q_d$ with $d=2^k-1$, for some integer $k$, satisfies $\tbl^*(Q_d)=\tbl(Q_d)=\tfrac{2^d}{d+1}$, and thus Corollary~\ref{cor:simple} implies that the broadcast protocol based on weakly dominating sets is within a multiplicative factor 2 of the optimal protocol for these graphs.

\medskip

Note that for hypercubes, we can use the stronger results obtained by Griggs in \cite{Gri21}, showing that $Q_d$ has a connected dominating set of size $(1+o(1))\tfrac{2^d}{d+1}$ as $d\to \infty$, and in particular $\wds(Q_d)=(1+o(1))\tfrac{2^d}{d+1}$. Theorem~\ref{thm:kvsimple} then gives an asymptotically optimal (simple) protocol for equality in hypercubes. Alternatively, we describe in the next section a protocol $\Pi_2$ which has per-bit cost $\tfrac{2^d}{d+1}+ 2^{d/2}$ on the hypercube $Q_d$, see Section~\ref{ssec:hypc} for details.

\section{Total vertex covers and a new protocol}\label{sec:tvc}

\subsection{The new protocol}

Recall that a \emph{total vertex cover} in a graph $G$ is a vertex subset that is both a total dominating set and a vertex cover. The minimum size of such a set is denoted by $\tvc(G)$. See Figures \ref{fig:grid} and \ref{fig:grid2} for illustrations of total vertex covers in grids. In this section we prove Theorem~\ref{thm:tvc}, restated below as Theorem~\ref{thm:tvc2}.

\begin{theorem}\label{thm:tvc2}
Let $G$ be a 2-connected graph $G$ and  $k$ be an integer. Then there is a protocol $\Pi_2$ for equality in $G$ of total cost $\cost_{\Pi_2}(G,k)\le (k/2 +o(k))  \cdot \tvc(G)$, and thus  $\opt(G)\le \cost_{\Pi_2}(G) \le \tfrac12\, \tvc(G)$.
\end{theorem}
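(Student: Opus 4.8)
The plan is to fix a total vertex cover $C$ of size $\tvc(G)$ and design a protocol in which only the vertices of $C$ broadcast, each sending roughly $k/2$ bits. Let me think through why $k/2$ bits per vertex of $C$ should suffice and how to guarantee correctness.

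The core idea, borrowed from Alon, Efremenko and Sudakov \cite{alon2017testing}, is to split the $k$ coordinates into two halves by fixing complementary projections $P,Q\colon \mathbb{F}_2^k\to\mathbb{F}_2^{k/2}$ and to let each active vertex reveal only one half of its input. Since $C$ is a vertex cover, the set $V(G)\setminus C$ is independent and every edge of $G$ has an endpoint in $C$; since $C$ is moreover a total dominating set, every vertex of $G$ (including those of $C$) has a neighbour in $C$, so every vertex lies on an edge meeting $C$. The key point is that if an edge $uv$ has one endpoint broadcasting $Px$ and the other broadcasting $Qx$, then $x_u=x_v$ is fully verified: the $Q$-sender compares first halves and the $P$-sender compares second halves. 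More generally, first-half agreement propagates across every edge incident to the set of $P$-senders, and second-half agreement across every edge incident to the set of $Q$-senders.

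Concretely, I would $2$-colour $C=A\sqcup B$, with $A$ revealing $P$ and $B$ revealing $Q$, and argue that when the edges incident to $A$ and the edges incident to $B$ each span and connect $G$, all first halves are equal and all second halves are equal, hence all inputs coincide. Each vertex of $C$ then sends exactly $k/2$ bits, giving total cost $\tfrac{k}{2}\lvert C\rvert=\tfrac{k}{2}\tvc(G)$. Domination of $G$ by both $A$ and $B$ is where total domination of $C$ together with $2$-connectivity should be used: every vertex already has a neighbour in $C$, and the minimum degree $2$ afforded by $2$-connectivity gives each vertex of $V(G)\setminus C$ at least two neighbours in $C$, leaving room to split $C$ so that both colours reach every vertex.

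For the connectivity requirement I would invoke the remark made earlier in the paper: detecting a discrepancy and announcing it along a spanning tree costs a number of bits depending only on $G$, hence $o(k)$ per input bit. This lets me relax the weak-connectivity condition and absorb it into the $o(k)$ term, so that only the domination/covering part of the partition must be handled exactly, and it is here that the $2$-connected spanning subgraph with at most $2n$ edges from Lemma~\ref{lem:2co}, together with an open ear decomposition, enters to organise the verification structure cheaply.

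The hard part will be engineering the partition $C=A\sqcup B$ (equivalently, the family of rank-$k/2$ hashes) so that the full input of every vertex is verified even though each active vertex discloses only one half: one must ensure that both halves of every input are checked somewhere, which amounts to showing that a minimum total vertex cover of a $2$-connected graph can be split into two sets each of which dominates and, up to the $o(k)$ connectivity patching, weakly connects the graph. I expect this to require the AES-style algebraic choice of the projections rather than the naive first-half/second-half split, so that no nonzero difference vector can be routed consistently through the verification structure; verifying that such a construction exists on every $2$-connected graph, and that it costs only $k/2+o(k)$ per active vertex, is the crux of the argument.
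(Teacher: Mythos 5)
There is a genuine gap, and it sits exactly where you placed the ``crux'': the half-revealing mechanism you propose cannot reach cost $\tfrac{k}{2}\,\tvc(G)$, and no choice of ``algebraic projections'' repairs it. In your scheme each active vertex discloses a rank-$(k/2)$ linear image of its input, so a check across an edge binds only $k/2$ bits of the receiver's input; correctness then forces, as you observe, \emph{both} colour classes $A$ and $B$ to dominate (and essentially weakly connect) $G$. A minimum total vertex cover need not split this way: in $K_{2,t}$ (Section~\ref{sec:biclique}) the minimum total vertex cover is $\{a,b,c\}$ with $a,b$ the degree-$t$ vertices and $c$ on the other side, while every dominating set of $K_{2,t}$ has size at least $2$; so in any split one class fails to dominate, and some big-side vertex $v\neq c$ only ever has $Px_v$ compared to anything --- perturbing $x_v$ inside $\ker P$ is accepted. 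Yet the theorem promises per-bit cost $3/2$ there. The failure persists even with arbitrary vertex-dependent linear maps $L_u$ of rank at most $k/2$: on $C_6$ with total vertex cover $\{1,2,4,5\}$, writing $y_i=x_{i+1}-x_i$ (indices mod $6$), acceptance confines $y_2\in\ker L_2$, $y_3\in\ker L_4$, $y_5\in\ker L_5$, $y_6\in\ker L_1$ with $y_2+y_3+y_5+y_6=0$ (after the intra-cover edges kill $y_1,y_4$); four subspaces of dimension at least $k/2$ in $\mathbb{F}_2^k$ always admit a nonzero quadruple summing to zero, so an unequal assignment is accepted. Linearity itself is the obstruction. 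You also misuse the paper's spanning-tree remark: it amortizes broadcasting a $O(1)$-bit \emph{verdict}, whereas restoring a missing equality check across a cut costs $\Theta(k)$ deterministic bits by the cut bound underlying Theorem~\ref{thm:lb}, not $o(k)$; so disconnection of a colour class cannot be ``absorbed into the $o(k)$ term''.

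The paper's actual proof uses a different mechanism, in which each message is a \emph{nonlinear} function of the sender's full input. By Lemma~\ref{lem:alon} there is a faithful host $F$ for $G$ with classes $U_1,\dots,U_n$ of size $nm$, where $m$ is chosen minimal with $m^2/e^{10\sqrt{\log m\log n}}\ge 2^k$ (so $\log(nm)=k/2+o(k)$; the quadratic number of special copies relative to the class size is precisely where the factor $\tfrac12$ comes from), and each $k$-bit word $w$ is injectively assigned a special copy $G_w$. A vertex $v_i$ of the total vertex cover broadcasts its \emph{identity}, the index $j$ with $V(G_{\lambda(v_i)})\cap U_i=\{u_i^j\}$. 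The receiver's consistency check --- that the received edge of $F$ lies in the receiver's \emph{own} copy $G_{\lambda(v_j)}$ --- binds the receiver's entire input, unlike a half-projection. Correctness then splits cleanly: the vertex cover property lifts every edge of $G$ to an edge of $F$, so the accepted identities assemble into a special copy, which by faithfulness is some designated $G_w$; total domination plus edge-disjointness of the copies pins $\lambda(v_j)=w$ for every vertex. Note finally that $2$-connectivity enters only as the hypothesis of Lemma~\ref{lem:alon}; it is not used to split the cover, and Lemma~\ref{lem:2co} and ear decompositions play no role in this proof (they are used later, in Theorem~\ref{th:3dgirth}).
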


We emphasize that in our protocol, only the vertices of a total vertex cover of $G$ send a message; all the other vertices remain silent.

\smallskip

Note that it might be the case that a 2-connected graph $G$ contains a 2-connected spanning subgraph $H$ with $\tvc(H)<\tvc(G)$, in which case it is natural to run the equality protocol in $H$ rather than in $G$. We therefore obtain  the following immediate corollary of Theorem~\ref{thm:tvc2}, which will be used extensively in our applications.

\begin{corollary}\label{cor:tvc}
For any 2-connected spanning subgraph $H$ of $G$, there is a protocol for equality  in the local broadcast model in $G$ whose per-bit cost is at most $\tfrac12\, \tvc(H)$, and in particular $\opt(G)\le \tfrac12\, \tvc(H)$.
\end{corollary}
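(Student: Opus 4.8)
The plan is to \emph{run the protocol for $H$ inside $G$}, exploiting the fact that in the local broadcast model the presence of additional edges only ever makes communication easier. Since $H$ is 2-connected by hypothesis, I would first apply Theorem~\ref{thm:tvc2} to $H$ itself: this yields a protocol $\Pi$ for equality in $H$ whose total cost satisfies $\cost_\Pi(H,k)\le (k/2+o(k))\cdot\tvc(H)$.

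The key observation is that $H$ is a \emph{spanning} subgraph of $G$, so that $V(H)=V(G)$ and $E(H)\subseteq E(G)$. Because the two graphs share the same vertex set and the equality problem depends only on the vertices and their inputs, an instance of equality on $G$ is literally the same instance as on $H$; only the available communication differs. I would then argue that $\Pi$ can be executed verbatim in $G$. Each vertex broadcasts exactly as prescribed by $\Pi$; in $G$ this broadcast is received by all of its $G$-neighbors, which form a superset of its $H$-neighbors. Since $\Pi$ is \emph{static}, every vertex knows in advance (from $G$ and $H$) which of its incoming messages travel along edges of $E(H)$, and can simply discard those arriving along edges of $E(G)\setminus E(H)$. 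Running the decision rule of $\Pi$ on the surviving messages then reproduces exactly the behaviour of $\Pi$ on $H$, so correctness is inherited: if all inputs are equal every vertex accepts, and if two inputs differ then at least one vertex rejects.

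It remains to check that the cost is preserved, and this is precisely where the local broadcast model is essential: the cost of a broadcast is the length of the message sent, counted only once, \emph{independently} of the number of recipients. Hence the extra edges of $G$ contribute nothing to the cost, and the total cost of executing $\Pi$ in $G$ equals $\cost_\Pi(H,k)\le (k/2+o(k))\cdot\tvc(H)$. Dividing by $k$ and letting $k\to\infty$ gives per-bit cost at most $\tfrac12\,\tvc(H)$, and therefore $\opt(G)\le\tfrac12\,\tvc(H)$.

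I expect no serious obstacle, which matches the fact that the statement is flagged as an ``immediate corollary'' of Theorem~\ref{thm:tvc2}. The only point requiring care is the formal transfer of a protocol from a subgraph to a supergraph, and the one thing to state cleanly is that receiving strictly more messages cannot break a protocol as long as the recipients ignore the surplus, which is legitimate exactly because the protocol is static and the edge sets of both $G$ and $H$ are known to all vertices in advance.
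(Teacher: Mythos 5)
Your proposal is correct and matches the paper's reasoning: the corollary is stated there as an immediate consequence of Theorem~\ref{thm:tvc2}, obtained exactly by running the protocol for the spanning subgraph $H$ inside $G$, where broadcasts cost the same regardless of how many neighbors receive them. Your added care about discarding messages arriving along edges of $E(G)\setminus E(H)$ (legitimate since the protocol is static and $H$ is known to all vertices) is a correct and worthwhile explicit justification of what the paper leaves implicit.
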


\begin{remark}\label{rem:tvc}
In terms of transversals, a total vertex cover is an (integral) transversal of open neighborhoods (total dominating set) and of edges (vertex cover). As we are dealing with graphs with no isolated vertices, a total vertex cover always exists, for instance every vertex cover of such a graph $G$ is dominating and can be made total by adding a neighbor of each of its elements, so $\tvc(G) \leq 2\vc(G)$.
\end{remark}

Our protocol $\Pi_2$ of Theorem~\ref{thm:tvc2} relies on a technical lemma (Lemma~\ref{lem:alon} below) from \cite{alon2017testing} and used there in the different setting of point-to-point communication. Before we can formally state this result, we need to introduce some terminology.
Let $H$ be a graph with $n$ vertices $v_1, \dots, v_n$. Let $F$ be a graph with its vertices partitioned into $n$ classes $U_1, \dots, U_n$. A subgraph of $F$ isomorphic to $H$ is called a \emph{special copy} of $H$ if for every $i \in \intv{1}{k}$ the vertex corresponding to $v_i$ in the copy belongs to $U_i$. We say that $F$ is a \emph{faithful host} for $H$ if the two following conditions are met:
\begin{enumerate}
    \item The edges of $F$ can be partitioned into edge-disjoint special copies of $H$; and
    \item $F$ contains no other special copy of $H$ than the aforementioned $|E(F)|/|E(H)|$ copies defining its edge set.
\end{enumerate}
See Figure \ref{fig:faithful} for an illustration (the three special copies of the triangle in the faithful host are depicted with different colors and width).

\begin{figure}[htb]
  \centering
    \includegraphics[scale=1.2]{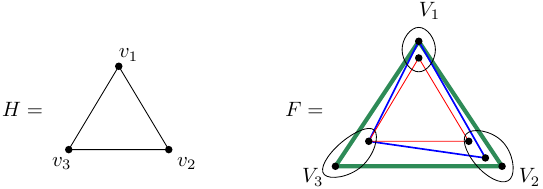}
  \caption{A faithful host of the triangle.\label{fig:faithful}}
\end{figure}

\begin{lemma}[{\cite[Lemma 3.3]{alon2017testing}}]\label{lem:alon}
Let $H$ be a 2-connected $n$-vertex graph and let $m$ be a positive integer. Then there is a faithful host for $H$ with classes of vertices $U_1, \dots, U_n$, each of size $nm$, containing a least
$m^2/ e^{10\sqrt{\log m \log n}}
$
special copies of $H$.
\end{lemma}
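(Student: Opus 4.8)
My plan is to generalize the classical Ruzsa--Szemerédi construction, which realizes the lemma for $H=K_3$ out of a Behrend set, to an arbitrary $2$-connected graph $H$. I would fix integer \emph{weights} $w_1,\dots,w_n$ attached to the vertices $v_1,\dots,v_n$ of $H$ and work in a cyclic group, viewing each class $U_i$ as a suitable copy of an interval. A placed special copy is indexed by a pair $(b,a)$, a \emph{base point} $b$ ranging over about $m$ values and a \emph{slope} $a$ drawn from a set $A\subseteq\{1,\dots,m\}$ to be chosen; the image of $v_i$ is the vertex $b+a\,w_i$ of $U_i$, and for each edge $v_iv_j$ of $H$ one declares $(b+aw_i,\,b+aw_j)$ to be an edge of $F$. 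If the weights are chosen so that the differences $w_i-w_j$ over edges $v_iv_j$ are pairwise distinct (and small compared with the modulus), then an edge of $F$ determines both the edge-class $ij$ it came from and the slope $a$; hence distinct index pairs give edge-disjoint copies which partition $E(F)$. This yields condition~(1) of a faithful host automatically, with the number of special copies equal to $|E(F)|/|E(H)|$.

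The core of the argument is condition~(2), the absence of \emph{spurious} special copies. Such a copy chooses one vertex $p_i\in U_i$ for every $i$ so that each edge $v_iv_j$ of $H$ is realized by a placed edge, i.e.\ $p_i-p_j=a_{ij}(w_i-w_j)$ for some $a_{ij}\in A$ (the distinctness of the $w_i-w_j$ lets us read off each $a_{ij}$). I want to force all the $a_{ij}$ to a common value, so that the copy coincides with a placed one. This is exactly where $2$-connectivity is used, and I would drive it through an open ear decomposition of $H$, as provided by the theory underlying Lemma~\ref{lem:2co}. Telescoping the relations $p_i-p_j=a_{ij}(w_i-w_j)$ around any cycle $C$ of $H$ yields a homogeneous relation $\sum_{ij\in C}\pm(w_i-w_j)\,a_{ij}=0$ whose coefficients sum to zero, i.e.\ a \emph{translation-invariant} equation in the slopes. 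Because $H$ is $2$-connected, every edge lies on a cycle and the cycle space ties all edges together, so the relations coming from a cycle basis (one cycle per ear) force all the $a_{ij}$ equal, provided $A$ contains no nontrivial solution of these equations. A cut vertex or a bridge would instead leave a slope unconstrained and create genuine spurious copies, which is precisely why the hypothesis cannot be dropped.

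It therefore suffices to choose $A\subseteq\{1,\dots,m\}$ as large as possible while avoiding all nontrivial solutions of the finitely many translation-invariant equations coming from the cycles of $H$, equivalently of a single combined equation with $s\le|E(H)|\le\binom{n}{2}$ terms. For this I would invoke the generalized Behrend/Ruzsa--Szemerédi bound: $\{1,\dots,m\}$ contains a set $A$ avoiding all nontrivial solutions of an $s$-term translation-invariant equation with $|A|\ge m/e^{O(\sqrt{\log m\,\log s})}$. Since $\log s\le 2\log n$, this gives $|A|\ge m/e^{O(\sqrt{\log m\log n})}$, and multiplying by the $\approx m$ base points produces at least $m^2/e^{10\sqrt{\log m\log n}}$ placed copies once the implied constants are fixed. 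Tuning the weights and the ambient modulus so that the maps $a\mapsto b+aw_i$ land each copy in a class of size $nm$ completes the construction.

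I expect the main obstacle to be the rigidity step of the second paragraph: proving that for \emph{every} $2$-connected $H$ the cycle relations collapse to a genuinely constraining translation-invariant system with no residual free slopes, and doing so uniformly enough that one fixed set $A$ works for all cycles simultaneously. The secondary difficulty is quantitative: controlling the dependence on the number of terms $s$ in the generalized Behrend construction so that the loss is exactly of order $e^{O(\sqrt{\log m\log n})}$, and choosing the modulus, the range of the parameters, and the weights so that the classes have size precisely $nm$ while the placed copies remain edge-disjoint.
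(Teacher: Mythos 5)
Your proposal reconstructs essentially the proof of Alon, Efremenko and Sudakov that the paper cites verbatim for Lemma~\ref{lem:alon} (the paper itself gives no proof): the same base-point/slope parametrization of the placed copies over a Behrend-type set avoiding translation-invariant linear equations, with $2$-connectivity forcing, via the telescoped cycle relations, all slopes of a would-be spurious copy to coincide. The two difficulties you flag --- one fixed set $A$ avoiding all cycle equations simultaneously, and the quantitative loss $e^{O(\sqrt{\log m \log n})}$ --- are precisely the steps the cited source resolves with the sphere-based Behrend construction (digits on a sphere avoid every invariant equation with coefficients bounded in terms of $n$ at once), so your route is the standard one and not a genuinely different argument.
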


We are now ready to prove the main result of this section.
\begin{proof}[Proof of Theorem~\ref{thm:tvc}]
  Let $n$ denote the number of vertices of $G$, that we call $v_1,
  \dots, v_n$, and let $m$ be the minimum integer such that
  $\frac{m^2}{e^{10\sqrt{\log m \log n}}} \geq 2^k$.
  By Lemma~\ref{lem:alon} there is a faithful host $F$ for $G$ with classes of vertices $U_1, \dots, U_n$, each of size $nm$, containing a least $2^k$ special copies of $G$.
  For every $i \in \intv{1}{n}$, let us name
  $u_i^1, \dots, u_i^{nm}$ the vertices of $U_i$. 
 We associate to each $k$-bit word $w$ a special copy $G_w$ of $G$ in $F$ in an injective way (i.e. if $w\neq w'$ then $G_w \neq G_{w'}$). Since there are at least $2^k$ special copies, this is always possible.

  For a fixed assignment $\lambda \colon V(G) \to \{0,1\}^k$ of $k$-bit inputs to the vertices of $G$, let us define the \emph{identity} of a vertex $v_i$ of $G$ as the integer
  $1\le j\le nm$ such that the vertex of the special copy $G_{\lambda(v_i)}$ corresponding to $v_i$ is $u_i^j$.
  In other words,
  $V(G_{\lambda(v_i)}) \cap U_i = \{u_i^j\}$. Such an integer always exists by the definition
  of a special copy.
  Notice that the identity of a vertex $v_i$ depends on its input $\lambda(v_i)$.
  
  \smallskip

  For every edge $v_iv_j \in E(G)$, we say that the identity $a_i$ of $v_i$ is \emph{consistent} with the identity $a_j$ of $v_j$ if:
  \begin{enumerate}
  \item there is an edge in $F$ between $u^{a_i}_i$ and $u_j^{a_j}$;
    and
  \item this edge belongs to the special copy $G_{\lambda(v_j)}$.
  \end{enumerate}
  Observe that due to the second item, this relation is not symmetric. However, given $a_i$, the vertex $v_j$ can check whether $a_i$ is consistent with $a_j$.
  
  \medskip
  
  We now fix an optimal total vertex cover $S$ of $G$, and we are ready to describe the protocol $\Pi_2$.
  Given an assignment $\lambda$ of $k$-bit inputs to the vertices of
  $G$, the protocol is the following:
  
  \smallskip
  \begin{tcolorbox}[title=\textbf{Protocol}]  
    \noindent\textit{Communication:} Each vertex $v$ of $S$ broadcasts its
    identity to its neighbors and the other vertices remain silent.
    
    \smallskip
    
    \noindent \textit{Decision:} A vertex $v$ of $G$ accepts the instance if and only if all identities received from its neighbors are consistent with its own identity.
\end{tcolorbox}
    \smallskip
  
  This completes the description of the protocol $\Pi_2$. Recall that we chose $F$ and the function $w\mapsto G_w$ depending only on $G$, and in particular they do not depend on the assignment of inputs $\lambda$. So using them and their own inputs, the vertices can indeed compute their own identity and check consistency with those possibly sent by their neighbors.
  
  \smallskip

  Let us now show that the protocol is correct.
  Suppose first that $\lambda$ assigns the same word $w$ to every vertex of $G$. Then for every edge $vv'$ of $G$, the identity of $v$ is trivially consistent with that of $v'$, since these are defined with respect to the same special copy $G_w$ of $G$. Hence in this case every vertex accepts the instance.
  
  Conversely, suppose that every vertex accepts the instance. For every $i \in \intv{1}{n}$, let $a_i$ be the identity of $v_i$.
  Recall that $S$ is a vertex cover, so for every edge $v_iv_j$ of $G$, at least one of the two endpoints lies in $S$, say $v_i\in S$. It follows that $v_i$ sends its identity $a_i$ to $v_j$. As $v_j$ accepts the instance, $u_i^{a_i}u_j^{a_j}$ is an edge in $F$. Hence $F$ has a subgraph $G'$ on vertex set $\{u_i^{a_i}\}_{i\in \intv{1}{n}}$ that is isomorphic to $G$ and where for every $i\in \intv{1}{n}$ the vertex corresponding to $v_i$ lies in the vertex subset $U_i$. The subgraph $G'$ of $F$ is a thus a special copy of $G$ in $F$, and by the definition of a faithful host $G'$ is one of the special copies that partition the edges of $F$, say $G'=G_w$ for some $k$-bit word $w$. 
  Since $S$ is a total vertex cover, every vertex $v_j$ of $G$ has a neighbor $v_i\in S$. By the second item in the definition of consistency, the edge $v_i^{a_i}v_j^{a_j}$ of $F$ belongs to the special copy $G_{\lambda(v_j)}$. As noted above this edge also belongs to $G'=G_w$ so by definition of faithful hosts, $\lambda(v_j) = w$. This shows that all vertices of $G$ have the same input, as desired.
  
  \smallskip
  
  It remains to bound the cost of the protocol. Recall that only the vertices in $S$ broadcast their identity. Also, recall that the identity of a vertex is an integer of $\intv{1}{nm}$, and that $n$ (the number of vertices of $G$) is a constant while $k\to \infty$.
  By the choice of $m$ we have
  \begin{align*}
  2\log (m-1) &\leq k + 10\log(e)\sqrt{\log(m-1)\log n}\\
  \log m &= \frac{k}{2} + O(\sqrt{\log m \log n})\\
  &= \frac{k}{2} + O(\sqrt{k \log n})
  \end{align*}
  The total cost of the protocol is thus at most
  \begin{align*}
      |S|\cdot \log mn &= |S| \cdot \left (\frac{k}{2} + O\left (\sqrt{k \log n} \right ) + \log n \right)\\
      &= \tvc(G)\cdot  \left (\frac{k}{2} + o(k) \right ).
  \end{align*}
It follows that the per-bit cost of the protocol is at most  $\tfrac12\,\tvc(G)$, which completes the proof.
\end{proof}

\subsection{Applications}

\subsubsection{Complete bipartite graphs $K_{2,t}$}\label{sec:biclique}
We start by giving a simple example of a family of graphs where the protocol $\Pi_2$ of Theorem~\ref{thm:tvc2} outperforms the protocols $\Pi_0$ of Theorem~\ref{thm:kvsimple} and $\Pi_1$ of Theorem~\ref{thm:kv}.
The complete bipartite graph $K_{2,t}$ has a total vertex cover that consists of three vertices: the two vertices of degree $t$ and one vertex of degree 2.
According to Theorem~\ref{thm:tvc2}, we thus have $\cost_{\Pi_2}(K_{2,t}) \leq 3/2$. This is tight as we also have $\opt(K_{2,t})\geq 3/2$ thanks to the lower bound $\opt(K_{2,t}) \geq \tbn^*(K_{2,t})$ (Theorem~\ref{thm:lb}).
On the other hand, we can observe (see \cite{khan2021testing}) that on $K_{2,t}$ the cost of the protocols $\Pi_0$ and $\Pi_1$ is always at least 2.

\subsubsection{Cycles} We immediately deduce from Theorem~\ref{thm:tvc2} an almost optimal protocol for equality in cycles. 

\begin{corollary}\label{cor:cycles}
On input $C_n$, the protocol $\Pi_2$ of Theorem~\ref{thm:tvc2} has per-bit cost at most
$\tfrac{n+1}{3}\le \opt(C_n) + \frac{1}{3}$. Moreover, if $n\equiv 0 \pmod 3$, $\Pi_2$ has per-bit cost $\tfrac{n}3=\opt(C_n)$.
\end{corollary}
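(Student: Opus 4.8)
The plan is to reduce the whole statement to a computation of the total vertex cover number of the cycle, and then to invoke Theorem~\ref{thm:tvc2} together with the linear-programming lower bound. Since $C_n$ is $2$-connected for every $n\ge 3$, Theorem~\ref{thm:tvc2} applies and yields $\cost_{\Pi_2}(C_n)\le \tfrac12\,\tvc(C_n)$, so the corollary follows once I control $\tvc(C_n)$ from above (and, for the mod-$3$ case, compute it exactly).

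First I would characterise the total vertex covers of $C_n$ combinatorially. Writing $S$ for such a set and $O=V(C_n)\setminus S$ for its complement, the vertex-cover condition says that $O$ is independent (no two cyclically consecutive out-vertices), while the total-domination condition says that every vertex $v$ has a neighbour in $S$, i.e. $O$ contains no two vertices at cyclic distance exactly $2$ (otherwise the vertex between them would have both neighbours outside $S$). Together these are equivalent to: all cyclic gaps between consecutive vertices of $O$ are at least $3$. As these gaps sum to $n$, such a set has at most $\lfloor n/3\rfloor$ vertices, and this is attained by taking every third vertex and distributing the leftover among the gaps when $3\nmid n$. Hence $\tvc(C_n)=n-\lfloor n/3\rfloor=\lceil 2n/3\rceil$; for the corollary only the upper bound $\tvc(C_n)\le \lceil 2n/3\rceil$, witnessed by this explicit construction, is strictly needed.

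Next I would feed this into Theorem~\ref{thm:tvc2} to get $\cost_{\Pi_2}(C_n)\le \tfrac12\lceil 2n/3\rceil$, and verify across the three residues of $n$ modulo $3$ that $\tfrac12\lceil 2n/3\rceil\le \tfrac{n+1}{3}$ (with equality exactly when $n\equiv 2\pmod 3$). For the lower-bound side, since $C_n$ is $2$-regular, Lemma~\ref{lem:deg} gives $\tbl^*(C_n)=\tfrac{n}{3}$, and Theorem~\ref{thm:lb} then yields $\opt(C_n)\ge \tfrac{n}{3}$, which is precisely the claimed inequality $\tfrac{n+1}{3}\le \opt(C_n)+\tfrac13$.

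Finally, when $n\equiv 0\pmod 3$ the construction gives a total vertex cover of size exactly $\tfrac{2n}{3}$, so $\cost_{\Pi_2}(C_n)\le \tfrac{n}{3}$. Combining this with $\opt(C_n)\le \cost_{\Pi_2}(C_n)$ (as $\Pi_2$ is a valid protocol) and with $\opt(C_n)\ge \tfrac{n}{3}$ sandwiches everything, forcing $\cost_{\Pi_2}(C_n)=\opt(C_n)=\tfrac{n}{3}$. The only step needing genuine, if elementary, care is the equivalence characterising total vertex covers of the cycle as distance-$\ge 3$ packings of the complement; once that is established the remainder is bookkeeping, and I anticipate no serious obstacle.
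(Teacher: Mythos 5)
Your proposal is correct and follows essentially the same route as the paper: an explicit total vertex cover of $C_n$ of size roughly $\tfrac{2n}{3}$ fed into Theorem~\ref{thm:tvc2}, with the lower bound $\opt(C_n)\ge \tbl^*(C_n)=\tfrac n3$ from Lemma~\ref{lem:deg} and Theorem~\ref{thm:lb}, and a sandwich argument when $3\mid n$. Your exact determination $\tvc(C_n)=\lceil 2n/3\rceil$ via the gap-$\ge 3$ characterisation of the complement is a correct but unneeded strengthening; the paper only exhibits the cover (all vertices whose index is nonzero modulo $3$) and bounds $\tvc(C_n)\le\tfrac23(n+1)$.
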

\begin{proof}
In a cycle $C_n$ we can construct a total vertex cover by selecting all vertices except those whose index is 0 modulo 3.
So $\tvc(C_n) \leq\frac{2}{3}(n+1)$ and the protocol of Theorem~\ref{thm:tvc2} has per-bit cost at most $(n+1)/3$ (and at most $n/3$ if $n\equiv 0 \pmod 3$).

 On the other hand, by Lemma~\ref{lem:deg}, $\tbl^*(C_n) = n/3$.
 Overall we get:
 \[
   \frac{n}{3} = \tbl^*(C_n) \leq \tbn^*(C_n) \leq \frac{1}{k} \opt(C_n,k) \leq \cost_{\Pi_2}(C_n)\leq \frac{n+1}{3},
 \]
as desired.
\end{proof}

\subsubsection{Hypercubes}\label{ssec:hypc}

In Section~\ref{sec:simple}, we observed that there is a simple protocol for equality in the $d$-dimensional hypercube $Q_d$ of per-bit cost $(1+o(1))\tfrac{2^d}{d+1}$, asymptotically matching the fractional lower bound $\tfrac{2^d}{d+1}$. The efficiency of this protocol was based on a recent result of Griggs on connected dominating sets in hypercubes \cite{Gri21}. We note that this result itself is based  on classical results on $q$-ary codes and crucially relies on the density of primes, even for simple cases such as $d=2^\ell-1$ (for some integer $\ell$) where the domination number of $Q_d$ is well understood. It turns out that Corollary~\ref{cor:tvc} can be used to give an alternative protocol for equality in the hypercube $Q_d$, $d=2^\ell-1$, of cost at most $\tfrac{1}{d+1}(2^d+2^{d/2})$, which only relies on basic arguments (once we assume Lemma~\ref{lem:alon}, which is based on a non-trivial construction of dense sets of integers without long arithmetic progression \cite{Beh46}).

\begin{theorem}\label{thm:hypercube}
Let $\ell\ge 2$ be an integer and let $d=2^\ell-1$. Then the hypercube $Q_d$ has a spanning 2-connected subgraph $H$ with $\tvc(H)\le 2\cdot (2^{d-\ell}+2^{d/2-\ell})$.
\end{theorem}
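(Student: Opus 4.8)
The plan is to exploit the perfect code structure of $Q_d$ when $d=2^\ell-1$. Let $C\subseteq \mathbb{F}_2^d$ be the binary Hamming code of length $d$: a linear code of dimension $d-\ell$ whose $2^{d-\ell}=2^d/(d+1)$ codewords form a perfect $1$-covering, so that the Hamming balls of radius $1$ around codewords partition $V(Q_d)$. I would take as a first candidate the set $S_0=C\cup(C+e_1)$, where $e_1$ is the first unit vector, and let $H_0$ be the spanning subgraph of $Q_d$ consisting of all edges incident to $S_0$. Then $S_0$ is automatically a vertex cover of $H_0$, and it is a total dominating set of $Q_d$ (hence of $H_0$): every non-codeword is adjacent to its unique nearby codeword in $C$, while each codeword $c$ is adjacent to $c+e_1\in S_0$. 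This already gives $|S_0|=2\cdot 2^{d-\ell}$, matching the main term of the target bound.

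The second step is to analyse $H_0$ precisely. A short calculation shows that every vertex outside $S_0$ is a ``petal'' $c+e_i$ with $c\in C$ and $i\neq 1$, and has exactly two neighbours in $S_0$: the codeword $c$, and the vertex $c+e_i+e_{j(i)}\in C+e_1$, where $j(i)$ is the unique coordinate with $h_{j(i)}=h_1+h_i$ (writing $h_t$ for the $t$-th column of a parity-check matrix of $C$). Thus $H_0$ has minimum degree at least $2$, but it need not be connected. Contracting the perfect matching $\{c,c+e_1\}$ identifies its components with the cosets of the subcode $C_1\le C$ generated by the weight-$3$ codewords through coordinate $1$. I would compute $\dim C_1$ by projecting these generators away from coordinate $1$: they become a perfect matching on the $d-1=2^\ell-2$ remaining coordinates, hence are linearly independent, giving $\dim C_1=2^{\ell-1}-1$. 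Consequently $H_0$ has exactly $N=2^{\dim C-\dim C_1}=2^{2^{\ell-1}-\ell}=\sqrt2\cdot 2^{d/2-\ell}$ connected components, each of which I would argue is itself $2$-connected (the contracted graph is a connected Cayley graph on $C_1$ of degree $2^{\ell-1}-1$, and the matching together with the $4$-cycles formed by pairs of petals lifts this to $2$-connectivity).

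The final step is to merge the $N$ components into a single $2$-connected spanning subgraph while adding at most $N\le 2\cdot 2^{d/2-\ell}$ vertices to $S_0$, which is exactly the remaining budget. The observation that makes this affordable is that a single bridge costs only one vertex: if $p$ is a petal of one component, then adding $p$ to $S$ lets me include in $H$ any edge of $Q_d$ from $p$ to a petal $r$ of a neighbouring component, since that edge is now covered by $p$ and $r$ may remain outside $S$. Two components are adjacent in $Q_d$ precisely when they differ by a weight-$3$ codeword avoiding coordinate $1$; since such codewords span $C/C_1$, the resulting quotient graph on the $N$ components is a connected Cayley graph of the elementary abelian group $C/C_1$. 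I would route a Hamilton cycle through it (connected Cayley graphs of abelian groups are Hamiltonian; for $N=2$ use a double bridge, and for $N=1$ nothing is needed), realise each of its $N$ edges by one such single-vertex bridge with distinct attachment points in each component, and check that a cycle of $2$-connected blocks joined by single edges at distinct vertices is $2$-connected. Adding these $N$ vertices keeps $S$ a total vertex cover and yields $\tvc(H)\le 2\cdot 2^{d-\ell}+N\le 2\,(2^{d-\ell}+2^{d/2-\ell})$.

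The main obstacle I anticipate is not the counting but the two $2$-connectivity verifications: showing that each component of $H_0$ is genuinely $2$-connected (as opposed to merely connected of minimum degree $2$), and controlling the merge so that the number of added vertices stays below $2\cdot 2^{d/2-\ell}$ rather than the factor-$2$-larger $2N$ that one would spend by bridging each link with two independent paths. The single-vertex-bridge device, combined with Hamiltonicity of the abelian quotient, is exactly what keeps the correction term at $2\cdot 2^{d/2-\ell}$; a looser merge would overshoot the stated bound by a constant factor.
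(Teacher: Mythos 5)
Your proposal is correct and its first two thirds coincide with the paper's proof: the paper likewise takes $S=C_0\cup(C_0+e_1)$ for the Hamming code $C_0$, lets the host graph be the spanning subgraph of all edges incident to $S$, and identifies the connected components with the cosets of the subcode spanned by the weight-$3$ codewords through coordinate $1$ (your $C_1$; the paper's $V$, generated by the vectors $v_i=e_1+e_i+e_{\sigma(i)}$). The genuine divergence is the merging step. The paper contracts the components, takes a spanning \emph{tree} of the quotient, and for each tree edge $e=xy$ uses the translation $x'=x+e_s+e_t$, $y'=y+e_t+e_s$ to exhibit a second parallel edge disjoint from $e$, adding one endpoint of each — two vertices per link. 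You instead take a Hamilton \emph{cycle} in the quotient (a connected Cayley graph of the elementary abelian group $C_0/C_1$; note that its connectivity also follows directly from the connectivity of $Q_d$, so you need not invoke that weight-$3$ codewords generate the Hamming code) and pay a single petal per cycle edge, using the correct observation that a cycle of $2$-connected blocks joined by single edges at distinct attachment vertices is $2$-connected, with the cases $N\in\{1,2\}$ handled separately as you do. This cheaper merge is not merely cosmetic: your exact count is right — since $i\mapsto\sigma(i)$ is a fixed-point-free involution on $\{2,\dots,d\}$ there are only $(d-1)/2$ independent generators $v_i$ (not $(d+1)/2$ as in the paper's claim), so the number of components is $2^{(d+1)/2-\ell}=\sqrt2\cdot 2^{d/2-\ell}$ rather than $2^{d/2-\ell}$; with this corrected count the paper's two-vertices-per-link merge gives only $2\bigl(2^{d-\ell}+\sqrt2\cdot 2^{d/2-\ell}\bigr)$, whereas your one-vertex bridges absorb the $\sqrt2$ and land exactly within the stated bound $2\bigl(2^{d-\ell}+2^{d/2-\ell}\bigr)$. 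The two verifications you flag are real but routine: for $2$-connectivity of each component you can borrow the paper's symmetry argument (all vertices of $S$ lie in a single orbit of the automorphism group of the component graph, so none is a cut vertex) together with the remark that no petal $c+e_i$ is a cut vertex because it lies on the $4$-cycle $c,\,c+e_i,\,c+e_i+e_{\sigma(i)},\,c+e_{\sigma(i)}$; and Hamiltonicity of connected Cayley graphs of abelian groups of order at least $3$ is classical.
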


\begin{proof}
We start by recalling a number of classical properties of Hamming codes.
Let $d=2^\ell-1$  and let $H$ be an $\ell$ by $d$ binary matrix  whose column vectors $h_1, \ldots, h_{d}$ are all the non-zero vectors in $\textrm{GF(2)}^\ell$. We assume for convenience that $h_1=\mathbf{1}$ (the all 1 vector). Let $C_0$ be the subgroup of $\textrm{GF(2)}^d$ consisting of all vectors $y$ such that $Hy=0$. For every $1\le i \le d$, we set $C_i=C_0+e_i$, where $e_i$ denotes the vector of $\textrm{GF(2)}^d$ whose entries are all 0 except at coordinate $i$ (the vectors $e_i$, $1\le i \le d$, form the standard basis of $\textrm{GF(2)}^d$ if viewed as a vector space, and  a generating set if  viewed as an additive group). We view the sets $C_i$ both as subsets of the vertex set $V(Q_d)$ and as cosets of the subgroup $C_0$ of $\textrm{GF(2)}^d$. In particular the sets $C_i$, $0\le i\le d$, partition $V(Q_d)$ and all have cardinality $\tfrac{2^d}{d+1}$. The crucial property of this construction is that for any $x\in C_i$ and any $j\ne i$, $x$ has exactly one neighbor $y$ in $C_j$: $y=x+e_s$, where $s$ is the index of the column $h_i+h_j$ in $H$. Note that the translation vector $e_s$ only depends on $i$ and $j$, so there is indeed a perfect matching between $C_i$ and $C_j$ in $Q_d$.

\smallskip

We now consider $S=C_0\cup C_1$ (which we view both as a subset of vertices of $Q_d$ and as a subgroup of $\textrm{GF(2)}^d$). Let $G_S$ be the subgraph of $Q_d$ induced by all edges having at least one endpoint in $S$. Note that $G_S$ is a spanning subgraph of $Q_d$: all vertices of $S$ have degree $d$ in $S$, and all vertices not in $S$ have degree 2 (being adjacent to exactly one vertex of $C_0$ and one vertex of $C_1$, by the paragraph above). Moreover all the vertices of $S$ are in the same orbit under the action of the automorphism group of $G_S$, so no vertex of $S$ is a cut-vertex in $G_S$ (otherwise all of them would be cut-vertices), and thus  all connected components of $G_S$ are  2-connected.

\medskip

\noindent{\it Claim.} $G_S$ has at most $2^{d/2-\ell}$ connected components.

\medskip

\noindent{\it Proof of claim.} 
Let $X$ be a connected component of $G_S$ and let $B=X\cap C_0$. For any $b\in B$, the vertices of $B$ at distance 3 from $b$ in $G_S$ are of the form $b+e_1+e_i+e_{\sigma(i)}$, where $2\le i\le d$ and $\sigma(i)$ is the index of the column
$h_1+h_i = \mathbf{1}+h_i$
in $H$. Write $v_i=e_1+e_i+e_{\sigma(i)}$ for any $2\le i \le d$. The observation above implies that $B$ is equal to $b+V$, where $V$ denotes the subgroup generated by the vectors $v_i$, $2\le i \le d $. Note that $v_i=v_{\sigma(i)}$ for any  $2\le i \le d$, but if we define $V'$ as a subset of $V$ containing exactly only one of $v_i$ and $v_{\sigma(i)}$ for any $2\le i \le d$, the vectors of $V'$ are linearly independent (each one has Hamming weight 3, and their supports are pairwise disjoint apart from the first coordinate which is common to the support of all vectors of $V'$). It follows that $V$ has dimension at least $|V'|=\tfrac{d+1}2$, and thus $|B|\ge 2^{(d+1)/2}$. Note that $|X|=(d+1)|B|$, since each vertex of $X\setminus B$ has exactly one neighbor in $B$, and thus $|X|\ge (d+1)2^{(d+1)/2}=2^{(d+1)/2+\ell}$. It follows that $G_S$ has at most \[\frac{2^d}{2^{(d+1)/2+\ell}}=2^{d-(d+1)/2-\ell}\le2^{d/2-\ell}\]  components, which concludes the proof of the claim.
\hfill $\blacksquare$

\medskip

Consider the graph $R$ obtained from $Q_d$ by contracting each component of $G_S$ into a single vertex. As $Q_d$ is connected, $R$ is also connected, so it contains a spanning tree $\tilde{T}$. Each edge $\tilde{e}\in E(\tilde{T})$ corresponds to at least one edge $e$ in $Q_d$ between two connected components of $G_S$, say $X_i$ and $X_j$. We observe that whenever there is such an edge $e$, there is actually at least one other edge $e'$ between $X_i$ and $X_j$ which is not incident to $e$. To see this, write $e=xy$, and assume $x\in C_s$ and $y\in C_t$ (note that $s,t\ge 2$, since otherwise $e$ would lie in $G_S$). Observe that by definition of $G_S$, we have $x'=x+e_s+e_t\in X_i$ and $y'=y+e_t+e_s\in X_j$. As $x$ and $y$ are adjacent in $Q_d$ and $x-y=x'-y'$, $x'$ and $y'$ are also adjacent, so we can set $e'=x'y'$.

For each edge $\tilde{e}\in E(\tilde{T})$, we consider the two edges $e$ and $e'$ in $Q_d$ defined above and add to $S$ one endpoint from each edge. Let $S'$ be the resulting vertex set, with $|S'|\le |S|+2\cdot 2^{d/2-\ell}$. Note that the subgraph $G_{S'}$ of $Q_d$ consisting of all edges incident to $S'$ is spanning and 2-connected (recall that each connected component of $G_S$ is 2-connected), and \[\tvc(G_{S'})\le |S'|\le 2\cdot \tfrac{2^d}{d+1}+2\cdot 2^{d/2-\ell}=2\cdot (2^{d-\ell}+2^{d/2-\ell}),\] so the theorem follows by taking $H=G_{S'}$.
\end{proof}

By Corollary~\ref{cor:tvc} we obtain the following as an immediate consequence.

\begin{corollary}\label{cor:hypercube}
Let $\ell\ge 2$ be an integer and let $d=2^\ell-1$. Then the protocol $\Pi_2$ for equality in the hypercube $Q_d$ has per-bit cost at most \[\frac{2^{d}}{d+1}+\frac{2^{d/2}}{d+1}=\opt(Q_d)+O\left(\sqrt{\opt(Q_d)}\right).\]
\end{corollary}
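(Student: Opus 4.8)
The plan is to deduce the statement directly from Theorem~\ref{thm:hypercube} together with Corollary~\ref{cor:tvc}, the only real work being the translation of the bound on $\tvc(H)$ into the stated closed form. First I would invoke Theorem~\ref{thm:hypercube}, which provides a spanning $2$-connected subgraph $H$ of $Q_d$ satisfying $\tvc(H) \le 2\,(2^{d-\ell} + 2^{d/2-\ell})$. Since $H$ is a $2$-connected spanning subgraph of $Q_d$, Corollary~\ref{cor:tvc} applies and guarantees that the protocol $\Pi_2$ for equality in $Q_d$ has per-bit cost at most $\tfrac12\,\tvc(H) \le 2^{d-\ell} + 2^{d/2-\ell}$.

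Next I would perform the arithmetic simplification. The key identity is $d+1 = 2^\ell$, coming from $d = 2^\ell - 1$. Substituting $2^\ell = d+1$ gives $2^{d-\ell} = 2^d/2^\ell = \tfrac{2^d}{d+1}$ and $2^{d/2-\ell} = 2^{d/2}/2^\ell = \tfrac{2^{d/2}}{d+1}$, so the per-bit cost of $\Pi_2$ is at most $\tfrac{2^d}{d+1} + \tfrac{2^{d/2}}{d+1}$, which is the first expression in the statement. To justify the second (asymptotic) expression $\opt(Q_d) + O(\sqrt{\opt(Q_d)})$, I would invoke the lower bound: since $Q_d$ is $d$-regular on $2^d$ vertices, Lemma~\ref{lem:deg} combined with Theorem~\ref{thm:lb} yields $\opt(Q_d) \ge \tbl^*(Q_d) = \tfrac{2^d}{d+1}$, while the upper bound just obtained gives $\opt(Q_d) \le \tfrac{2^d}{d+1} + \tfrac{2^{d/2}}{d+1}$; in particular $\tfrac{2^d}{d+1}$ is the leading term. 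It then remains only to check that the error term has the advertised order, which reduces to the elementary inequality $\tfrac{2^{d/2}}{d+1} \le \tfrac{2^{d/2}}{\sqrt{d+1}} = \sqrt{\tfrac{2^d}{d+1}} \le \sqrt{\opt(Q_d)}$, so that indeed $\tfrac{2^{d/2}}{d+1} = O(\sqrt{\opt(Q_d)})$.

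I do not anticipate any genuine obstacle in this argument: all the substantive content, namely the construction of a spanning $2$-connected subgraph of $Q_d$ with a small total vertex cover, is already carried out in Theorem~\ref{thm:hypercube}, and the reduction from $\tvc(H)$ to per-bit cost is exactly what Corollary~\ref{cor:tvc} packages. The only point requiring minor care is making the $O(\cdot)$ claim precise, and this is handled by the single inequality above, which even shows that the error term is smaller than $\sqrt{\opt(Q_d)}$ by a factor $\tfrac{1}{\sqrt{d+1}}$.
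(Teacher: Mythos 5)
Your proof is correct and follows exactly the paper's route: the paper derives this corollary as an immediate consequence of Theorem~\ref{thm:hypercube} combined with Corollary~\ref{cor:tvc}, with the substitution $2^\ell = d+1$ and the lower bound $\opt(Q_d)\ge \tfrac{2^d}{d+1}$ from Lemma~\ref{lem:deg} and Theorem~\ref{thm:lb} doing the rest. Your explicit verification of the $O(\sqrt{\opt(Q_d)})$ error term via $\tfrac{2^{d/2}}{d+1}\le \sqrt{\tfrac{2^d}{d+1}}\le \sqrt{\opt(Q_d)}$ is a welcome piece of care that the paper leaves implicit.
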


\subsubsection{Grids} 

We also obtain an asymptotically optimal protocol for equality in grids.

\begin{figure}[htb]
  \centering
    \includegraphics[scale=0.8]{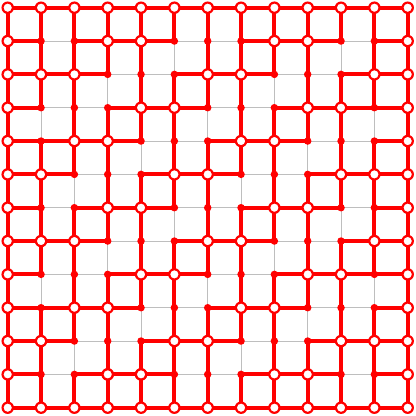}
  \caption{The vertex set $S$ (depicted as white circles) and the spanning 2-connected subgraph $G_S$ of the  square grid, in red.\label{fig:grid}}
\end{figure}

\begin{corollary}\label{cor:grids}
In the $n\times n$ square grid $G_n$, the protocol $\Pi_2$ of Theorem~\ref{thm:tvc2} has per-bit cost at most
$\tfrac15\,n^2+2n=(1+o(1))\opt(G_n)$.
\end{corollary}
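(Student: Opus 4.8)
```latex
\paragraph{Proof plan for Corollary~\ref{cor:grids}.}
By Corollary~\ref{cor:tvc}, it suffices to exhibit a spanning 2-connected subgraph $H$ of the $n\times n$ grid $G_n$ with $\tvc(H)\le \tfrac25\,n^2+O(n)$, since then $\Pi_2$ has per-bit cost at most $\tfrac12\,\tvc(H)\le \tfrac15\,n^2+O(n)$. The matching lower bound comes for free from Lemma~\ref{lem:deg} and Theorem~\ref{thm:lb}: the grid has maximum degree $4$, so $\opt(G_n)\ge \tbl^*(G_n)\ge \tfrac{n^2}{5}$, giving $\tfrac15\,n^2+2n=(1+o(1))\opt(G_n)$. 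So the whole task reduces to the combinatorial construction of a small total vertex cover of a suitable 2-connected spanning subgraph.

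The plan is to build the vertex set $S$ from a periodic pattern of density $\tfrac25$ that simultaneously forms a vertex cover and a total dominating set, mirroring the intended picture in Figure~\ref{fig:grid}. The natural candidate is a diagonal-stripe pattern: index the grid vertices by $(i,j)$ with $0\le i,j\le n-1$, and place a vertex of $S$ exactly when $2i+j\equiv 0$ or $1\pmod 5$. Each residue class modulo $5$ has density $\tfrac15$, so two of them give density $\tfrac25$ and hence $|S|=\tfrac25\,n^2+O(n)$. I would first verify that this $S$ is a \emph{vertex cover}: every grid edge is either horizontal (changing the value of $2i+j$ by $1$) or vertical (changing it by $2$), and one checks by a short case analysis over the five residues that no edge can have both endpoints in residues $\{2,3,4\}$, i.e. every edge touches a residue in $\{0,1\}$. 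Then I would verify that $S$ is a \emph{total dominating set} of the subgraph it induces edges on: because the selected residues come in consecutive pairs $\{0,1\}$, every selected vertex has a grid-neighbor (differing by $\pm1$ in the residue) that is also selected, so the subgraph $G_S$ of edges incident to $S$ has minimum degree at least $1$ on $S$ and $S$ has no isolated vertices in it.

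Next I would address 2-connectivity and the boundary effects, which is where the argument is genuinely delicate rather than routine. The periodic pattern guarantees the right \emph{global} density, but near the four sides of the grid the stripes get truncated, so I would patch the boundary by adding $O(n)$ extra vertices along the borders to (a) keep $G_S$ spanning and connected and (b) eliminate cut-vertices; this costs only $2n$ in the final count, which is why the stated bound carries an additive $+2n$ term rather than being purely $\tfrac15\,n^2$. To confirm that $G_S$ (after patching) is 2-connected, I would argue that the diagonal stripes of selected vertices form long internally-disjoint paths that weave through every row and column, so that removing any single vertex leaves the rest connected via a neighboring stripe; the interior is visibly 2-connected by the brick-like structure of the pattern, and the $O(n)$ boundary additions ensure the outer face is bounded by a single cycle.

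The main obstacle will be the 2-connectivity verification together with the boundary clean-up, rather than the density count, which is immediate. Establishing that no vertex is a cut-vertex requires care precisely at the corners and edges of the grid, where the diagonal pattern loses its translational symmetry; the key idea is that the interior pattern is 2-connected by inspection (each selected vertex lies on two independent cycles formed by adjacent diagonal stripes), so one only needs to guarantee that the truncated stripes at the boundary are knitted together into a single outer cycle, which the $O(n)$ extra vertices accomplish. Once $H=G_S$ is shown to be a 2-connected spanning subgraph with total vertex cover $S$ of size $\tfrac25\,n^2+O(n)$, Corollary~\ref{cor:tvc} finishes the proof.
```
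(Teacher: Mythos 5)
Your overall strategy is the paper's strategy: a periodic diagonal pattern of density $\tfrac25$, an $O(n)$ boundary patch including the outer face, the subgraph $G_S$ of edges incident to $S$, Corollary~\ref{cor:tvc} for the upper bound, and Lemma~\ref{lem:deg} with Theorem~\ref{thm:lb} for the matching lower bound (indeed the paper's set is $i\in\{3j-2,3j-1\}\pmod 5$ in row $j$, i.e.\ $i+2j\equiv 3,4 \pmod 5$, which is your pattern up to swapping coordinates and translating). However, your central verification step is false: the set $S=\{(i,j): 2i+j\equiv 0 \text{ or } 1 \pmod 5\}$ is \emph{not} a vertex cover of the grid $G_n$. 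A grid edge changes $2i+j$ by $\pm 1$ or $\pm 2$ modulo $5$, so for instance the edge between $(0,2)$ and $(0,3)$ joins residues $2$ and $3$ and has neither endpoint in $S$; likewise residues $2$ and $4$ are joined by edges. In fact no union of two residue classes of a linear form with steps $\pm1,\pm2$ can ever be a vertex cover of the grid, because the complementary three residues would have to be independent in the circulant graph $C_5(1,2)$, which is $K_5$. Your ``short case analysis'' would have uncovered this; the same computation shows the paper's pattern is not a vertex cover of $G_n$ either. Your total-domination check is also incomplete: you only verify that vertices of $S$ have neighbors in $S$, whereas total domination of the spanning subgraph requires every vertex (selected or not) to have a neighbor in $S$.

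The repair is exactly the framing the paper uses, and it rescues your pattern without modification: run the protocol on $H=G_S$, the subgraph of edges incident to $S$. Then $S$ is a vertex cover of $H$ \emph{by definition} --- no covering property with respect to $G_n$ is needed --- and the property that genuinely requires checking is that every vertex of $G_n$ has a neighbor in $S$ (which makes $H$ spanning and $S$ a total dominating set of $H$). This does hold for your pattern in the interior: the four neighbors of a vertex of residue $r$ realize all four offsets $\pm 1, \pm 2$, hence all four residues other than $r$, so in particular some residue in $\{0,1\}$; the boundary is handled by your $O(n)$ patch, and adding the whole outer face (as the paper does) also yields the 2-connectivity of $G_S$ that you rightly flag as the delicate point, giving $|S|\le \tfrac25 n^2+4n$ and per-bit cost at most $\tfrac15 n^2+2n$. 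So the gap is not in the construction but in a misidentified proof obligation: as written, your plan hinges on verifying a statement that is provably false, and would stall at that step until reorganized around the automatic vertex-cover property of $G_S$.
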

\begin{proof}
In the $n\times n$ square grid $G=G_n$, consider the set $S$ of vertices $(i,j)\in [n]^2$ which either lie on the outerface, or are such that $i\in \{3j-2,3j-1\}\pmod 5$ (in words, in row $j$ of the grid, we add to $S$ all vertices located in columns whose index is $3j-2$ or $3j-1$ modulo 5). Let $G_S$ be the  subgraph of $G$ induced by the edges incident to $S$ (see Figure~\ref{fig:grid} for an illustration). Observe that $G_S$ is a spanning subgraph of $G$ and is 2-connected. By definition of $G_S$, it has a $S$ as a total vertex cover so  $\tvc(G_S)\le |S|\le \tfrac25\,n^2+4n$. By Corollary~\ref{cor:tvc}, the protocol $\Pi_2$ has per-bit cost at most $\tfrac12\,|S|\le \tfrac15\,n^2+2n$. By Lemma~\ref{lem:deg} we have $\opt(G)\ge \tfrac15\,n^2$, and the result follows.
\end{proof}

Similarly, we obtain asymptotically optimal protocols for equality in triangular grids and grids with all diagonals.

\begin{corollary}\label{cor:grids2}
In the $n\times n$ triangular grid $T_n$, the protocol $\Pi_2$ of Theorem~\ref{thm:tvc2} has per-bit cost at most
$\tfrac17\,n^2+2n=(1+o(1))\opt(T_n)$. In the $n\times n$ grid $P_n\boxtimes P_n$ with all diagonals (i.e., the strong product of two paths $P_n$), the protocol $\Pi_2$ of Theorem~\ref{thm:tvc2} has per-bit cost at most
$\tfrac19\,n^2+2n=(1+o(1))\opt(P_n\boxtimes P_n)$.
\end{corollary}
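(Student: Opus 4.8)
The plan is to mirror the argument of Corollary~\ref{cor:grids} for the square grid. In each case I will exhibit a vertex set $S$ that is a total vertex cover of a spanning $2$-connected subgraph $G_S$ of the grid, with $|S|\le \tfrac{2}{\Delta+1}\,n^2+O(n)$, where $\Delta$ is the degree of an interior vertex. Since an interior vertex of $T_n$ has degree $6$ and an interior vertex of $P_n\boxtimes P_n$ has degree $8$, Lemma~\ref{lem:deg} yields the matching lower bounds $\opt(T_n)\ge \tfrac{n^2}{7}$ and $\opt(P_n\boxtimes P_n)\ge \tfrac{n^2}{9}$, while Corollary~\ref{cor:tvc} turns the bound on $|S|$ into a per-bit cost of at most $\tfrac12|S|\le \tfrac{1}{\Delta+1}n^2+O(n)$, as claimed.

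To build $S$ I would use a periodic modular pattern generalizing the shifted diagonal bands of Corollary~\ref{cor:grids}. Index the vertices by $(i,j)\in[n]^2$ and let $O$ be the set of offsets of the closed neighborhood of an interior vertex, so $|O|=\Delta+1$. The key observation is that one can choose integers $\alpha,\beta$ so that the linear form $f(i,j)=\alpha i+\beta j$ maps $O$ bijectively onto $\mathbb{Z}/(\Delta+1)$: for $T_n$, whose interior vertices have the four grid neighbours together with the two diagonal neighbours $(1,1),(-1,-1)$ along one fixed direction, the form $f=i+2j \bmod 7$ works, and for $P_n\boxtimes P_n$ the form $f=3i+j \bmod 9$ works (a direct check shows the offset values then exhaust all residues). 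Consequently a single residue class $\{f\equiv 0\}$ is an efficient (perfect) dominating set, and taking the two adjacent classes $S_0=\{(i,j):f(i,j)\equiv 0 \text{ or } 1\}$ yields a \emph{total} dominating set of density exactly $\tfrac{2}{\Delta+1}$: domination is immediate since even one class dominates every vertex, and totality holds because the offset realizing $f=+1$ (respectively $f=-1$) supplies every vertex of $S_0$ with a neighbour in $S_0$. Letting $S$ be $S_0$ together with all boundary vertices of the grid gives $|S|\le \tfrac{2}{\Delta+1}n^2+O(n)$.

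It then remains to check that $G_S$, the subgraph induced by the edges incident to $S$, is spanning, has $S$ as a total vertex cover, and is $2$-connected. Spanningness follows from domination; the total-domination property transfers to $G_S$ exactly as in the proof of Theorem~\ref{thm:tvc2}, since every vertex outside $S$ lies in $G_S$ only through an edge to $S$ while every vertex of $S$ has an $S$-neighbour; and $S$ is a vertex cover of $G_S$ by construction. The main obstacle is the verification of $2$-connectivity. The classes $\{f\equiv 0,1\}$ form a family of parallel diagonal double-lines, and once the full boundary is added one must argue that these double-lines together with the boundary frame constitute a $2$-connected spanning subgraph, exactly as asserted (and illustrated in Figure~\ref{fig:grid}) for the square grid. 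I would verify this by inspecting the pattern: each double-line reaches the boundary at both of its ends, and adjacent double-lines are linked to one another through the vertices lying in the strips between them, so that $G_S$ contains no cut-vertex. This bookkeeping is routine but is the one place requiring genuine care. Combining the bound on $|S|$ with Corollary~\ref{cor:tvc} then gives per-bit costs at most $\tfrac17 n^2+O(n)$ and $\tfrac19 n^2+O(n)$, which are $(1+o(1))\opt(T_n)$ and $(1+o(1))\opt(P_n\boxtimes P_n)$ respectively.
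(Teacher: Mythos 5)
Your overall skeleton is exactly the paper's: the paper's proof of this corollary is one line, saying the argument is identical to that of Corollary~\ref{cor:grids} and exhibiting, via Figure~\ref{fig:grid2}, total vertex covers of 2-connected spanning subgraphs with $\tfrac27 n^2+4n$ (resp.\ $\tfrac29 n^2+4n$) vertices, after which Corollary~\ref{cor:tvc} and Lemma~\ref{lem:deg} finish as you describe. Where you genuinely differ is in replacing the paper's pictures by an explicit algebraic pattern: two residue classes of a linear form that is bijective on the closed neighborhood offsets. Your two forms do check out: for $T_n$ with diagonals $(1,1),(-1,-1)$, the map $f=i+2j \bmod 7$ sends the offsets $(1,0),(0,1),(1,1),(-1,-1),(0,-1),(-1,0),(0,0)$ to $1,2,3,4,5,6,0$, and for $P_n\boxtimes P_n$ the map $f=3i+j\bmod 9$ likewise exhausts $\mathbb{Z}/9$. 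This makes the density count and the total-domination argument cleaner and more generalizable than figure inspection (indeed it points toward the Cayley-graph question raised in the paper's conclusion), and the lower bounds via Lemma~\ref{lem:deg} are handled correctly; your $O(n)$ lower-order term is slightly weaker than the stated $2n$ but suffices for the $(1+o(1))\opt$ claim.

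However, your 2-connectivity sketch rests on a false geometric picture, and this is the one place where the sketch as written would fail. The set $S_0=\{f\equiv 0,1\}$ does \emph{not} form connected ``double-lines'': since $f$ is injective on the nonzero offsets, a vertex with $f\equiv 0$ has exactly one neighbor in $S_0$ (the offset with $f$-value $1$, namely $(1,0)$ for $T_n$ and $(0,1)$ for the king grid), and symmetrically for $f\equiv 1$. So the subgraph induced on $S_0$ is a perfect matching of scattered dominoes arranged in a staircase lattice, and there are no ``lines reaching the boundary at both ends'' whose existence your argument invokes. The 2-connectivity of $G_S$ is in fact true, but it must be argued through the non-$S$ vertices, which have exactly two $S_0$-neighbors in the interior: e.g.\ in $T_n$, for a domino $\{v,v+(1,0)\}$ with $f(v)\equiv 0$, the vertices $v+(1,1)$ and $v+(0,-1)$ each give internally disjoint $2$-paths between the two ends of the domino, while $v+(0,1)$ and $v-(1,0)$ each link this domino to the domino based at $v+(-2,1)$, and the vertex with $f\equiv 4$ below $v$ links it to the domino at $v+(-3,-2)$; since $(-2,1)$ and $(-3,-2)$ generate the lattice $\{(a,b): a+2b\equiv 0 \pmod 7\}$ of domino positions, these connectors (together with the boundary cycle, all of whose vertices you placed in $S$) yield 2-connectivity. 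You should redo the verification along these lines rather than via the nonexistent double-lines; once that is repaired, your proof is correct and matches the paper's bound up to the constant in the linear term.
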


\begin{proof}
The proof is identical to that of Corollary~\ref{cor:grids}. We only need to find total vertex covers in some spanning 2-connected subgraph of the triangular grid (with $\tfrac27\,n^2+4n$ vertices), and of the grid with all diagonals (with  $\tfrac29\,n^2+4n$ vertices). Such sets are depicted in Figure~\ref{fig:grid2}.
\end{proof}

\begin{figure}[htb]
  \centering
    \includegraphics[scale=0.8]{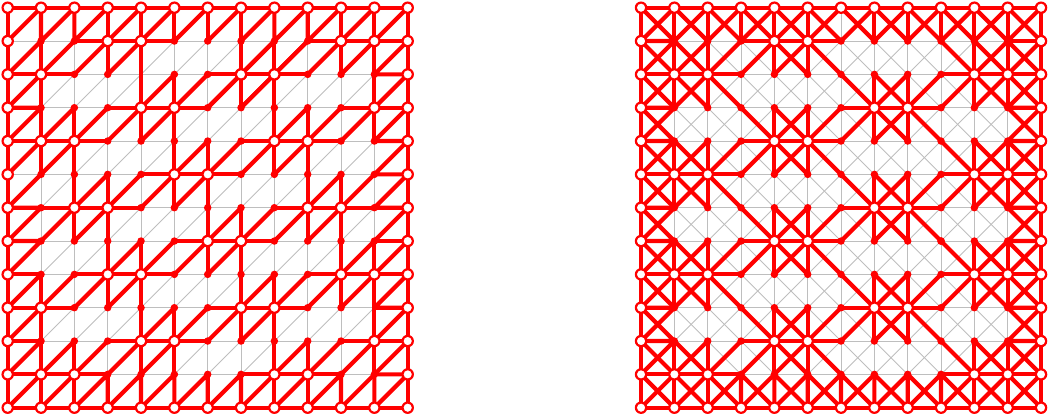}
  \caption{Total vertex covers (in white) of 2-connected spanning subgraphs (in red) of the triangular grid (left) and the grid with all diagonal (right).\label{fig:grid2}}
\end{figure}

We note that the lower order term in all these results can easily by improved by a factor of 2 (we have chosen not to do so for the sake of simplicity). It is not immediately clear how to reduce the lower order term to a constant.

\subsubsection{Regular graphs}

The following was proved in \cite{chen2017note}, under a different terminology.

\begin{theorem}[\cite{chen2017note}]\label{thm:cubictvc}
Every cubic graph on $n$ vertices has a total vertex cover on at most $3n/4$ vertices.
\end{theorem}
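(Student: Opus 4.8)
The plan is to pass to the complement of a total vertex cover and to prove the equivalent inequality by a discharging argument. Concretely, I would fix a \emph{minimum} total vertex cover $S$ of the cubic graph $G$ and set $I = V(G)\setminus S$. Since $S$ is a vertex cover, $I$ is independent, and as $G$ is cubic every vertex of $I$ then has all three of its neighbors in $S$. The target $|S|\le 3n/4$ is equivalent to $|I|\ge n/4$, and I would reach it by giving every vertex an initial charge of $1$ (total charge $n$) and redistributing so that every vertex of $S$ finishes with charge at least $4/3$ while no vertex of $I$ finishes negative; since charge is conserved this yields $n\ge \tfrac43|S|$, as wanted.

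The first discharging rule would be that each vertex of $I$ sends $1/3$ along each of its three edges. This empties the $I$-vertices (final charge $0$) and gives every vertex of $S$ a bonus of $1/3$ per neighbor it has in $I$, so any $S$-vertex with at least one $I$-neighbor immediately reaches charge $\ge 4/3$. The only troublesome vertices are the \emph{bad} ones: vertices of $S$ whose three neighbors all lie in $S$, which receive nothing and stay at charge $1$.

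To feed the bad vertices I would exploit the minimality of $S$. If $v\in S$ is bad then $S-v$ is still a vertex cover (every edge at $v$ has its other endpoint in $S$) and $v$ itself would remain dominated by its three $S$-neighbors; hence $S-v$ would be a strictly smaller total vertex cover, contradicting minimality, \emph{unless} deleting $v$ isolates some vertex of $G[S-v]$. Such a vertex can only be a neighbor $w\in S$ of $v$ whose unique $S$-neighbor is $v$; this witness $w$ then has exactly one neighbor in $S$ and two in $I$, so after the first rule its charge is $1+2/3=5/3$. The second rule would send $1/3$ from $w$ to $v$. Because $w$ has $v$ as its only $S$-neighbor, $w$ witnesses no other bad vertex, so the assignment $v\mapsto w$ is injective and each witness discharges its surplus only once; thus every bad $v$ reaches $4/3$ while each witness keeps $5/3-1/3=4/3$. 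A short check of the remaining types (an $S$-vertex with one, resp. two, $I$-neighbors ends at $4/3$, resp. $5/3$) then confirms that all of $S$ finishes at charge $\ge 4/3$.

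I expect the main obstacle to be precisely the bad vertices, since they are invisible to the natural rule pushing charge inward from the independent set; the resolving idea is that minimality forces each bad vertex to own a private ``dependent'' neighbor of $S$-degree one, and that these dependents carry exactly the surplus $1/3$ required. It is reassuring that the bound is tight on $K_4$, where $S$ is any three vertices, the single $I$-vertex feeds $1/3$ to each, and every $S$-vertex lands at exactly $4/3$; this also suggests that the same charge-target philosophy (aiming for charge $\tfrac{d+1}{d}$ on each vertex of $S$) should drive the general $d$-regular statement of Theorem~\ref{thm:tvcregular}, with the extra slack for $d\ge 5$ coming from the larger surplus available at witnesses.
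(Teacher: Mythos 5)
Your proof is correct and is essentially the paper's argument in discharging form: your Rule~1 is the paper's double count of the edges between $S$ and $T=V(G)\setminus S$, and your Rule~2 is exactly the paper's key minimality step, namely that each vertex of $S$ with no neighbor outside $S$ owns a private neighbor $w\in S$ whose unique $S$-neighbor it is (the paper phrases this as $|S_0|\le |S_{d-1}|$ via the same injective witness map). Your closing remark is also on target: the paper proves the statement in precisely this way for all $d$-regular graphs (Theorem~\ref{thm:tvcregular2}), with the extra surplus at witnesses yielding the improved bound for $d\ge 5$.
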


Combining Theorem~\ref{thm:cubictvc} and Theorem~\ref{thm:tvc2} for the upper bound and using Lemma~\ref{lem:deg} for the lower bound, we directly obtain the following $\tfrac32$-approximation for equality in cubic 2-edge-connected graphs.

\begin{corollary}\label{cor:3d}
For any $n$-vertex 2-edge-connected cubic graph $G$, the protocol $\Pi_2$ of Theorem~\ref{thm:tvc2} has per-bit cost at most $3n/8\le \tfrac32\,\opt(G)$.
\end{corollary}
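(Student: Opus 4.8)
The plan is to read off the result by chaining the three facts already at our disposal, with one connectivity subtlety to clear up first. For the upper bound I would feed the total vertex cover bound for cubic graphs into the protocol $\Pi_2$: Theorem~\ref{thm:cubictvc} gives $\tvc(G)\le 3n/4$, and Theorem~\ref{thm:tvc2} then produces a protocol whose per-bit cost is at most $\tfrac12\,\tvc(G)\le \tfrac{3n}{8}$. For the lower bound, since $G$ is cubic its maximum degree is $\Delta=3$, so Lemma~\ref{lem:deg} yields $\tbl^*(G)=\tfrac{n}{4}$, and Theorem~\ref{thm:lb} upgrades this to $\opt(G)\ge \tbl^*(G)=\tfrac{n}{4}$. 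Combining the two estimates, the per-bit cost of $\Pi_2$ is at most $\tfrac{3n}{8}=\tfrac32\cdot\tfrac{n}{4}\le \tfrac32\,\opt(G)$, which is exactly the claim.

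The one point requiring care is a hypothesis mismatch: Theorem~\ref{thm:tvc2} is stated for $2$-connected (that is, $2$-vertex-connected) graphs, whereas the corollary only assumes $2$-edge-connectivity. The observation that bridges this gap is that for cubic graphs the two notions coincide. Indeed, suppose a connected cubic graph had a cut vertex $v$; deleting $v$ would split the graph into at least two components, and since $v$ has only three incident edges, at least one of these components would be joined to $v$ by a single edge, which would then be a bridge, contradicting $2$-edge-connectivity. Hence any $2$-edge-connected cubic graph is automatically $2$-connected, so Theorem~\ref{thm:tvc2} applies to $G$ directly and there is no need to pass to a spanning subgraph via Corollary~\ref{cor:tvc}.

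I do not expect any genuine obstacle here, since the corollary is a direct specialization of earlier results; the only thing to check beyond the arithmetic is the equivalence of the two connectivity notions for cubic graphs, which is the short counting argument above. It is worth noting that the resulting factor $\tfrac32$ is exactly $\tfrac{d}{2}$ with $d=3$, in line with the general statement of Corollary~\ref{cor:d}, and that it already improves on the $4$-approximation of Theorem~\ref{thm:kv}.
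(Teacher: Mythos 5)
Your proof is correct and follows exactly the paper's route: combine Theorem~\ref{thm:cubictvc} with Theorem~\ref{thm:tvc2} for the upper bound and Lemma~\ref{lem:deg} (via Theorem~\ref{thm:lb}) for the lower bound $\opt(G)\ge n/4$. Your explicit verification that $2$-edge-connectivity implies $2$-connectivity for cubic graphs is a welcome addition --- the paper uses this standard fact silently --- and your counting argument for it is sound.
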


\begin{remark}\label{rem:3drandom}
We can obtain a more efficient protocol for random cubic graphs as they are almost surely Hamiltonian \cite{robinson1994almost}, using the fact that $n$-cycles have a total vertex cover of size $(2/3+o(1))n$. This approach gives a protocol of per-bit cost $(1/3+o(1)) \,n\le (\tfrac43+o(1))\opt(G)$ for random cubic graphs, improving on Corollary~\ref{cor:3d} for almost all cubic graphs.
\end{remark}

\smallskip

Cubic graphs of large girth  share many properties with random cubic graphs (techniques solving problems in one class often work to solve problems in the other). We prove the following counterpart of the result above  for cubic graphs of large girth (we can actually use the same proof to give another proof of the $(1/3+o(1)) \,n$ result for random cubic graphs without using the Hamiltonicity of random cubic graphs, but we omit the details).

\begin{theorem}\label{th:3dgirth}
For any $\epsilon>0$, there is an integer $g$ such that the following holds. If $G$ is an $n$-vertex cubic graph which is  2-edge-connected and has girth at least $g$, then $G$ has a 2-connected spanning subgraph $G'$ with $\tvc(G')\le (\tfrac23+2\epsilon) n$, and thus the protocol $\Pi_2$ of Theorem~\ref{thm:tvc2} has per-bit cost $(\tfrac13+\epsilon) n\le (\tfrac43+\epsilon)\opt(G)$.
\end{theorem}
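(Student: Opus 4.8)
The plan is to exploit the fact that a cubic graph of large girth decomposes, after deleting a perfect matching, into a small number of long cycles, each of which admits an efficient total vertex cover coming from Corollary~\ref{cor:cycles}. First, since $G$ is 2-edge-connected (hence bridgeless) and cubic, Petersen's theorem guarantees a perfect matching $M$; deleting it leaves a $2$-factor $F = G - M$, that is, a disjoint union of cycles $Z_1,\dots,Z_r$ spanning $V(G)$. Every $Z_i$ is a cycle of $G$, so its length $\ell_i$ is at least the girth $g$; since $\sum_i \ell_i = n$ this forces $r \le n/g$. On each $Z_i$ I take the total vertex cover $T_i$ of the cycle $C_{\ell_i}$ provided by Corollary~\ref{cor:cycles}, of size $|T_i| \le \tfrac23(\ell_i+1)$, and note that $\bigcup_i T_i$ is a total vertex cover of the disjoint union $F$: it is a vertex cover of each cycle and, crucially, total domination already holds using only the cycle edges of $F$.

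The union $F$ of cycles is of course disconnected, so the heart of the argument is to reconnect the $Z_i$ into a spanning $2$-connected subgraph $G'$ by adding only a few matching edges. I would contract each $Z_i$ to a single vertex and let $R$ be the resulting multigraph, whose edges are exactly the edges of $M$ running between distinct cycles. Since contracting connected subgraphs cannot decrease edge-connectivity, $R$ is $2$-edge-connected on $r$ vertices, and therefore (via a closed-ear decomposition, in the spirit of Lemma~\ref{lem:2co}) contains a spanning $2$-edge-connected submultigraph $R'$ with at most $2r$ edges; let $M' \subseteq M$ be the corresponding set of matching edges and set $G' = F \cup M'$. The main point to check is that $G'$ is $2$-connected, which I would prove by a blow-up argument: removing any vertex $w$ of $G'$ leaves the cycle $Z_i$ containing $w$ as a path $Z_i - w$, deletes at most one edge of $M'$ (as $M$ is a matching), and keeps every other $Z_j$ intact; contracting the cycles back then shows the remainder corresponds to $R'$ minus at most one edge, which is still connected because $R'$ is $2$-edge-connected. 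Hence $G' - w$ is connected for every $w$, i.e.\ $G'$ is $2$-connected. I expect this $2$-connectivity verification to be the main obstacle, in particular keeping track of the fact that distinct matching edges attach to distinct vertices of each cycle, so that the attachment points of $Z_i$ survive the removal of $w$.

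It remains to turn $\bigcup_i T_i$ into a total vertex cover $S$ of $G'$ and to bound its size. Adding the edges of $M'$ can only violate the vertex-cover condition on those new edges, so for each edge of $M'$ whose two endpoints avoid $\bigcup_i T_i$ I add one of its endpoints to $S$; this preserves total domination (which held through the cycle edges and is only helped by enlarging $S$) and restores the vertex-cover property, so $S$ is a total vertex cover of $G'$. Counting gives $|S| \le \sum_i \tfrac23(\ell_i+1) + |M'| \le \tfrac23 n + \tfrac23 r + 2r = \tfrac23 n + \tfrac83 r \le \tfrac23 n + \tfrac{8}{3}\cdot \tfrac{n}{g}$. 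Choosing $g$ large enough that $\tfrac{8}{3g} \le 2\epsilon$ yields $\tvc(G') \le |S| \le (\tfrac23 + 2\epsilon) n$. By Theorem~\ref{thm:tvc2} the protocol $\Pi_2$ run on $G'$ then has per-bit cost at most $\tfrac12 \tvc(G') \le (\tfrac13+\epsilon)n$, and combining with the lower bound $\opt(G) \ge \tbl^*(G) = n/4$ from Lemma~\ref{lem:deg} gives the claimed approximation ratio of the form $\tfrac43 + O(\epsilon)$ (up to rescaling $\epsilon$).
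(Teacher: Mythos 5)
Your proposal is correct and follows essentially the same route as the paper: take a perfect matching $M$ (Petersen), cover each cycle of the $2$-factor $G-M$ with a total vertex cover of size roughly $\tfrac23$ of its length, contract the cycles, extract a sparse spanning $2$-edge-connected submultigraph of the quotient via an ear decomposition, lift its $\le 2r \le 2n/g$ matching edges back to form $G'$, and add one endpoint per lifted edge to the cover. In fact you are more careful than the paper at the two points it glosses over: the quotient multigraph is in general only $2$-edge-connected (the paper asserts it is $2$-connected), and your vertex-deletion argument supplies the verification, omitted in the paper, that $2$-edge-connectivity of the quotient suffices for $G'$ to be $2$-connected.
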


\begin{proof}
Let $M$ be a perfect matching and let $C_1, \ldots,C_\ell$ be the disjoint cycles in the subgraph of $G$ induced by the edges of $E(G)\setminus M$ (each cycle has length at least $g$, by definition). In each cycle $C_i$, there is a total vertex cover $S_i$ of size $\lceil 2|C_i|/3 \rceil\le (2/3+\epsilon)|C_i|$, for sufficiently large $g$. Let $H$ be the multigraph on $\ell$ vertices obtained from $G$ by contracting each cycle $C_i$ into a single vertex $v_i$. This graph is 2-connected, and by Lemma~\ref{lem:2co}, it  has a 2-connected spanning subgraph $H'$ with at most $2\ell-2\le 2\ell$ edges. Let $G'$ be the subgraph of $G$ consisting of the union of the cycles $C_i$ and the edges of $H'$. This graph $G'$ is a 2-connected spanning subgraph of $G$. For each edge of $H'$, add one of its endpoints in a new set $S_0$. Then $\bigcup_{i=0}^\ell S_i$ is a total vertex cover of $G'$ of size at most $(2/3+\epsilon)n+2\ell\le (2/3+\epsilon)n+2 n/g\le (2/3+2\epsilon)n$ for sufficiently large~$g$. By Corollary~\ref{cor:tvc}, we obtain  $\cost(G)\le \tfrac12\,\tvc(G')\le (1/3+\epsilon)n$, as desired.
\end{proof}

By extending the proof of Theorem~\ref{thm:cubictvc} we can prove that for $d\ge 3$,  every $d$-regular graph on $n$ vertices has a total vertex cover on at most $\tfrac{d}{d+1}\cdot n$ vertices. This is optimal for every $d$, since any total vertex cover of the complete graph $K_{d+1}$ (which is $d$-regular) contains at least $d$ vertices. It was proved in \cite{chen2017note} that there are two more extremal connected graphs in the case $d=3$, and all the other connected cubic graphs have a total vertex cover with less than $3n/4$ vertices. The result below implies that for $d\ge 5$, complete graphs are the only extremal examples. 
\begin{theorem}\label{thm:tvcregular2}
For every $d\ge 3$, every $d$-regular graph $G$ on $n$ vertices has a total vertex cover on at most $\frac{d}{d+1} \cdot n$ vertices. For $d\ge 5$, every connected $d$-regular graph $G$ on $n$ vertices distinct from $K_{d+1}$ has a total vertex cover on at most $\tfrac{d-\epsilon}{d+1}\cdot n$ vertices, with $\epsilon= \tfrac1{2d+1}$.
\end{theorem}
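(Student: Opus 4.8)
The plan is to pass to the complementary object. A set $S\subseteq V(G)$ is a total vertex cover exactly when $I:=V(G)\setminus S$ is independent and $G[S]$ has no isolated vertex; since $G$ is $d$-regular, the latter condition says precisely that no vertex outside $I$ has \emph{all} of its neighbours in $I$. Thus proving $\tvc(G)\le \tfrac{d}{d+1}\,n$ is equivalent to producing an independent set $I$ with $|I|\ge \tfrac{n}{d+1}$ such that $G[V\setminus I]$ has no isolated vertex. The natural candidates are maximal independent sets, because a maximal independent set $I$ is dominating, so its closed neighbourhoods cover $V(G)$; as each has size $d+1$, the covering inequality immediately gives $|I|\ge \tfrac{n}{d+1}$, with equality only when the closed neighbourhoods tile $V(G)$, i.e.\ essentially when $G$ is a disjoint union of copies of $K_{d+1}$. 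This already matches the extremal example and is why I would build the proof around maximal independent sets.

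The obstruction is that a maximal independent set $I$ may leave behind \emph{surrounded} vertices, i.e.\ a set $Z:=\{z\in V\setminus I : N(z)\subseteq I\}$ of vertices that are isolated in $G[S]$; these cannot simply be swapped into $I$. The key quantitative observation is that surrounded vertices are over-covered, which produces slack in the covering bound. Counting closed neighbourhoods,
\[
(d+1)|I| \;=\; |I| + d\,|Z| + \sum_{u\in S\setminus Z}|N(u)\cap I| \;\ge\; n + (d-1)|Z|,
\]
so $|I|\ge \tfrac{n+(d-1)|Z|}{d+1}$, i.e.\ we gain $\tfrac{d-1}{d+1}|Z|$ over the target. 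I would then \emph{repair} $I$ by demoting a small set $T\subseteq I$ into $S$ so that every $z\in Z$ acquires a demoted neighbour (hence a neighbour in $S$); demotion never creates new isolated vertices, since a demoted vertex keeps all $d$ of its neighbours inside the enlarged $S$. The resulting set is a genuine total vertex cover of size $|S|+|T|=n-(|I|-|T|)$, so it suffices to guarantee $|I|-|T|\ge \tfrac{n}{d+1}$.

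The hard part will be bounding the repair cost $|T|$ against the slack $\tfrac{d-1}{d+1}|Z|$. Choosing $T$ to be a minimum set of $I$-vertices dominating $Z$, the worst case is when the vertices of $Z$ have pairwise (almost) disjoint neighbourhoods in $I$: then $|T|$ can be as large as $|Z|$, which exceeds the available slack. Here I expect to need one of two refinements. Either (i) observe that disjoint neighbourhoods of $Z$ force $|N(Z)\cap I|$, and hence $|I|$, to be correspondingly large, so that the two lower bounds on $|I|$ combine to absorb $|T|$; or (ii) choose $I$ more carefully among maximal independent sets — for instance minimising $|Z|$, or running a local exchange/discharging argument that redistributes a charge of $\tfrac{1}{d+1}$ per vertex and shows each surrounded vertex can be paid for locally. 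This discharging route is the natural way to extend the cubic argument of Theorem~\ref{thm:cubictvc}, and the tightness analysis should confirm that equality $\tfrac{d}{d+1}\,n$ forces every local configuration to be a $K_{d+1}$.

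For the second statement I would sharpen the slack estimate under the extra hypotheses $d\ge 5$, $G$ connected, and $G\ne K_{d+1}$. The covering inequality above is an equality only when the closed neighbourhoods of $I$ partition $V(G)$ and $Z=\varnothing$; connectedness together with $G\ne K_{d+1}$ rules this out, forcing either a strictly positive over-coverage term $\sum_{u\in S\setminus Z}(|N(u)\cap I|-1)$ or a nonempty $Z$ that contributes net slack after repair. Propagating a single such local defect through the graph — again most cleanly via discharging — should yield a linear surplus of order $\tfrac{1}{(2d+1)(d+1)}\,n$, giving $\tvc(G)\le \tfrac{d-\epsilon}{d+1}\,n$ with $\epsilon=\tfrac{1}{2d+1}$; the explicit constant $2d+1$ strongly suggests that the gain is extracted from a bounded-size non-complete local neighbourhood and then summed over the graph.
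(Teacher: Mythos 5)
Your complementary reformulation is correct (for a $d$-regular graph, $S$ is a total vertex cover if and only if $I=V(G)\setminus S$ is independent and no vertex outside $I$ has all of its neighbours in $I$), and your covering inequality $|I|\ge \frac{n+(d-1)|Z|}{d+1}$ is right. But the heart of the proof --- paying for the repair --- is exactly where your argument stops, and neither refinement you gesture at closes the deficit. The slack is $\frac{d-1}{d+1}|Z|$ while the repair may cost up to $|Z|$, so you are short by $\frac{2}{d+1}|Z|$, as you yourself note. Refinement (i) fails quantitatively: when the sets $N(z)\cap I$, $z\in Z$, are pairwise disjoint, each demoted vertex covers at most one vertex of $Z$, so $|T|=|Z|$ is forced, and the disjointness bound $|I|\ge d|Z|$ holds with equality when $|Z|=|I|/d$, giving no extra information. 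In that regime the covering inequality yields only $|I|\ge \frac{dn}{d^2+1}$, whereas the requirement $|I|-|Z|=\frac{d-1}{d}|I|\ge\frac{n}{d+1}$ would need $|I|\ge \frac{dn}{d^2-1}$; since $\frac{dn}{d^2+1}<\frac{dn}{d^2-1}$, a configuration with these parameters defeats the method, and you have not shown such configurations cannot occur in a $d$-regular graph --- that impossibility is itself the missing combinatorial content. Refinement (ii) is only named, not executed, and the obvious local exchange is blocked: a surrounded vertex $z$ cannot be swapped into $I$, since all $d\ge 2$ of its neighbours lie in $I$. The second statement is entirely programmatic in your write-up (``should yield'', ``strongly suggests''); in particular the value $\epsilon=\frac{1}{2d+1}$ is never actually derived.

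For contrast, the paper avoids any repair step by analysing a \emph{minimum} total vertex cover $S$ directly. It partitions $S$ into classes $S_i$ according to the number $i\le d-1$ of neighbours in the independent set $T=V(G)\setminus S$, counts the edges between $S$ and $T$ to get $|S|=d|T|+|S_0|-\sum_{i=1}^{d-1}(i-1)|S_i|$, and then exploits minimality through two exchange arguments: every vertex of $S_0$ has a neighbour in $S_{d-1}$ (hence $|S_0|\le|S_{d-1}|$), which already gives $|S|\le\frac{d}{d+1}n$; and, for the second statement, for every $u\in T$ the set $N(u)\cap S_1$ induces a clique (otherwise swap two nonadjacent such vertices for $u$), so connectedness and $G\ne K_{d+1}$ force every $u\in T$ to have a neighbour in $S_2\cup\dots\cup S_{d-1}$, whence $|T|\le\sum_{i=2}^{d-1}i|S_i|$ and, using $d\ge 5$, $|S|\le\left(d-\tfrac12\right)|T|$, which yields exactly $\epsilon=\frac{1}{2d+1}$. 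If you want to salvage your route, these minimality exchanges (translated to a suitably chosen $I$, e.g.\ one maximising $|I|$ and then minimising $|Z|$) are the ingredient your slack-versus-repair accounting is missing.
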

\begin{proof}
Let $S$ be a total vertex cover with the minimum number of vertices and let $T = V(G)\setminus S$.
For every $i\in \intv{0}{d - 1}$, let $S_i$ denote the subset of those vertices of $S$ that have exactly $i$ neighbors in $T$. Because $S$ is a total vertex cover and $G$ is $d$-regular, there is no vertex in $S$ with $d$ neighbors in $T$, hence $S = S_0 \cup \dots \cup S_{d-1}$. Also, since $S$ is a vertex cover, there are no edges between vertices in $T$.

The edges between $S$ and $T$ can be counted in two different ways, which yields the following equality:
\begin{equation}
    \sum_{i=0}^{d-1} i |S_i| = d |T|.
\end{equation}
As a consequence, we have:
\begin{align}
    |S| &= d |T| + |S_0| - \sum_{i=1}^{d-1} (i-1) |S_i|. \label{e:bnds}
\end{align}
Note that each vertex $u\in S_0$ has at least one neighbor in $S_{d-1}$, since otherwise we could remove $u$ from $S$ and still have a total vertex cover, which would contradict the minimality of $S$. On the other hand, each vertex of  $S_{d-1}$ has at most one neighbor in $S_0$, so it follows that $|S_0|\le |S_{d-1}|$. By \eqref{e:bnds}, it follows that 
\begin{align}
    |S| &\le  d |T|   - \sum_{i=1}^{d-2} (i-1) |S_i|-(d-3)|S_{d-1}| \label{e:2}
\end{align}

If $|T| \geq \frac{n}{d + 1}$, then $|S| = n-|T| \le  \frac{d}{d+1}\cdot n$. Otherwise $|T| \le \frac{n}{d + 1}$, and \eqref{e:2} implies that $|S|\le \tfrac{d}{d+1}\cdot n$, which proves the first part of the statement.

\medskip

We now assume for the remainder of the proof that $d\ge 5$, $G\neq K_{d+1}$, and that $G$ is connected. If $|T|\ge (\tfrac{1+\epsilon}{d+1})\cdot n$, then $|S|\le \tfrac{d-\epsilon}{d+1}\cdot n$, as desired. So we can assume that $|T|\le (\tfrac{1+\epsilon}{d+1})\cdot n$.

\medskip

\noindent {\it Claim.
For every $u\in T$, $N(u)\cap S_1$ induces a clique in $G$ and $u$ has a neighbor in $S_2 \cup \dots \cup S_{d - 1}$.}

\medskip

\noindent \emph{Proof of claim.}
To show the first part of the statement, suppose towards a contradiction that $u$ has two neighbors $v,w \in S_1$ that are not adjacent. Since $v,w\in S_1$, all the neighbors of $v$ and $w$ distinct from $u$ are in $S$. Then observe that $S\setminus \{vw\} \cup \{u\}$ is a total vertex cover, a contradiction to the minimality of $S$.
If $u$ has no neighbor in $S_2 \cup \dots \cup S_{d - 1}$, recall that it can neither have neighbors in $T$ nor in $S_0$, hence $N(u) \subseteq S_1$. By the first part of the statement, $G[\{u\} \cup N(u)]$ is a clique. As $G$ is $d$-regular and connected, $V(G) = \{u\} \cup N(u)$ and thus $G=K_{d+1}$, a contradiction.\hfill $\blacksquare$

\medskip

We now count the number of edges between $T$ and $S_2 \cup \dots \cup S_{d - 1}$. On the one hand, the claim above shows that this number is at least $|T|$. On the other hand, it is at most $\sum_{i=2}^{d-1}i|S_i|$. We thus obtain $|T|\le \sum_{i=2}^{d-1}i|S_i|$ and thus \[\frac{|T|}2\le \sum_{i=2}^{d-1}\frac{i}2\cdot |S_i|\le \sum_{i=2}^{d-2}(i-1)|S_i|+(d-3)|S_{d-1}|,\] where we have used that $d\ge 5$.
By \eqref{e:2}, it follows that we have \[|S|\le d|T|-|T|/2=(d-\tfrac12)|T|\le (d-\tfrac12)(\tfrac{1+\epsilon}{d+1})\cdot n=\tfrac{d-\epsilon}{d+1}\cdot n\] for $\epsilon= \tfrac1{2d+1}$, as desired.
\end{proof}

Theorem~\ref{thm:tvcregular2} can be combined with Theorem~\ref{thm:tvc2} to prove the following.

\begin{corollary}
For any $d\ge 3$ and  any 2-connected $d$-regular graph  $G$, the protocol $\Pi_2$ of Theorem~\ref{thm:tvc2} has per-bit cost at most $\frac{d}{2d+2}\cdot n = \tfrac{d}2\cdot \opt(G)$. If moreover, $d\ge 5$ and $G\ne K_{d+1}$, then the protocol $\Pi_2$ has per-bit cost at most $\tfrac{d-\epsilon}2 \cdot \opt(G)$, for some $\epsilon>0$ depending only on $d$.
\end{corollary}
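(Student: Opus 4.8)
The plan is to chain together three results already established above, since this corollary is a direct combination. First I would bound the size of an optimal total vertex cover of $G$ using Theorem~\ref{thm:tvcregular2}. As $G$ is $d$-regular, this immediately yields $\tvc(G)\le \tfrac{d}{d+1}\cdot n$. Then, because $G$ is assumed 2-connected, Theorem~\ref{thm:tvc2} applies and converts this into a bound on the per-bit cost of $\Pi_2$, namely
\[
\cost_{\Pi_2}(G)\le \tfrac12\,\tvc(G)\le \frac{d}{2d+2}\cdot n.
\]

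The second step is to re-express this absolute bound in terms of $\opt(G)$, which I would do via the linear programming lower bound. By Lemma~\ref{lem:deg}, a $d$-regular graph satisfies $\tbl^*(G)=\tfrac{n}{d+1}$, and Theorem~\ref{thm:lb} gives $\opt(G)\ge \tbn^*(G)\ge \tbl^*(G)=\tfrac{n}{d+1}$. Rearranging this as $n\le (d+1)\opt(G)$ and substituting into the previous display yields $\cost_{\Pi_2}(G)\le \frac{d}{2d+2}\cdot(d+1)\opt(G)=\tfrac{d}{2}\opt(G)$, which is the first claim.

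For the second claim I would simply repeat the argument with the sharper bound. A 2-connected graph is in particular connected, so when $d\ge 5$ and $G\ne K_{d+1}$ the hypotheses of the stronger part of Theorem~\ref{thm:tvcregular2} are met, giving $\tvc(G)\le \tfrac{d-\epsilon}{d+1}\cdot n$ with $\epsilon=\tfrac1{2d+1}$. The identical chain through Theorem~\ref{thm:tvc2} and the lower bound $n\le(d+1)\opt(G)$ then produces $\cost_{\Pi_2}(G)\le \tfrac{d-\epsilon}{2}\opt(G)$.

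Honestly, there is no real obstacle here: the corollary is a routine assembly of Theorems~\ref{thm:tvcregular2}, \ref{thm:tvc2}, and \ref{thm:lb} together with Lemma~\ref{lem:deg}. The only points requiring a moment's care are noting that 2-connectivity supplies the connectedness needed for the second part of Theorem~\ref{thm:tvcregular2}, and observing that the exactness of the lower bound $\opt(G)\ge \tfrac{n}{d+1}$ for regular graphs is precisely what makes the multiplicative factor collapse cleanly to $\tfrac{d}{2}$ (respectively $\tfrac{d-\epsilon}{2}$).
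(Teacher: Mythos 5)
Your proposal is correct and follows exactly the route the paper intends: the corollary is stated as an immediate combination of Theorem~\ref{thm:tvcregular2} (the total vertex cover bound, whose second part applies because 2-connectivity implies connectivity), Theorem~\ref{thm:tvc2} (halving $\tvc(G)$ into the per-bit cost of $\Pi_2$), and the lower bound $\opt(G)\ge \tbl^*(G)=\tfrac{n}{d+1}$ from Lemma~\ref{lem:deg} and Theorem~\ref{thm:lb}. Nothing is missing; your chain $\cost_{\Pi_2}(G)\le \tfrac12\,\tvc(G)\le \tfrac{d}{2d+2}\,n\le \tfrac{d}{2}\,\opt(G)$ (and its $\epsilon$-improved analogue) is precisely the argument the paper leaves implicit.
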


Observe that complete graphs $K_{d+1}$ have a very efficient  protocol of per-bit cost 1, where a single vertex broadcasts its whole input. In combination with the corollary above, this outperforms the 4-approximation given by Theorem~\ref{thm:kv} whenever $d\le 8$.

\section{Conclusion}\label{sec:ccl}

\subsection{Monotone total vertex cover} Given a 2-connected graph $G$, let $\tvc^{\downarrow}(G)$ be the minimum of $\tvc(H)$ for all 2-connected spanning subgraphs $H$ of $G$. Equivalently, $\tvc^{\downarrow}(G)$ is the minimum cardinality of a subset $S$ of vertices of $G$ such that the subgraph $G_S$ of $G$ consisting of all edges incident to $S$ is spanning and 2-connected. Notice the similarity with the definition of $\wds(G)$, which is the minimum cardinality of a subset $S$ of vertices of $G$ such that the subgraph $G_S$ of $G$ consisting of all edges incident to $S$ is spanning and connected. A simple rephrasing of Corollary~\ref{cor:tvc} is that there is protocol for equality  in the local broadcast model in any 2-connected graph $G$ with per-bit cost  at most $\tfrac12  \tvc^{\downarrow}(G)$. Consequently, when $\tfrac12  \tvc^{\downarrow}(G)$ is close to $\tbn^*(G)$ we obtain almost-optimal  protocols for equality in $G$. This is the case for any graph that contains $K_{2,t}$ as a spanning subgraph (Section~\ref{sec:biclique}),  for hypercubes (Theorem~\ref{thm:hypercube}), and for grids (Corollary~\ref{cor:grids}). It was also used implicitly in Remark~\ref{rem:3drandom} that any $n$-vertex Hamiltonian graph $G$ satisfies $\tvc^{\downarrow}(G)\le \tfrac23(n+1)$, and in Theorem~\ref{th:3dgirth} to obtain an efficient protocol for  cubic graphs of large girth.

\smallskip
We believe that independently of its connections with multiparty equality,  this common variant of total vertex covers and weakly connected dominating sets is worth further investigation. The parameter $\tvc^{\downarrow}$ is in some sense much better behaved than $\tvc$: adding edges to $G$ does not increase $\tvc^{\downarrow}(G)$, while this might increase $\tvc(G)$.

\subsection{Open problems}

A natural problem is whether the fractional lower bound $\tbn^*(G)$ can always be attained by $\opt(G)$. We have not been able to find a graph for which the two parameters provably differ. A natural class to consider for potential counterexamples is the class of 2-edge-connected cubic graphs, which has a clean expression for~$\tbn^*$. For this class we have only been able to find protocols of per-bit cost $3n/8$, while the fractional lower bound is $n/4$. Note that if we have $\opt(G)=\tbn^*(G)$ for every graph $G$, then $\opt(G)$ can indeed be computed in polynomial time.

\smallskip

Another interesting problem is to bound the per-bit complexity of equality as a function of the number of vertices, regardless of any fractional lower bound. This question makes sense for the class of all graphs, but also for specific graph classes studied in structural and algorithmic graph theory, such as planar graphs or graphs of bounded treewidth.

\smallskip

We have seen that for various types of grids we could obtain asymptotically optimal protocols for equality, using total vertex covers in some 2-connected subgraphs, of size at most twice the fractional lower bound. The three types of grids we have considered can all be seen as Cayley graphs of $\mathbb{Z}^2$, with different generating sets. As the three constructions are very similar, a natural question is whether our constructions can be generalized to any Cayley graph of $\mathbb{Z}^2$.

\smallskip

It remains an open question whether the upper bound of $4\opt(G)$ on the cost of the protocol $\Pi_1$ in Theorem~\ref{thm:kv} can be improved.

\smallskip

Finally, in this paper we have only considered protocols that are deterministic and static. It is an interesting problem to understand whether significantly better bounds can be obtained for protocols that are randomized and/or non-static.

\subsection*{Acknowledgements} We thank Moritz Mühlenthaler and Alantha Newman for helpful discussions at the early stages of the project, and Zolt\'an Szigeti for pointing out references \cite{Gri21} and \cite{Whi32} to us. We also thank ChatGPT for  the suggested approaches for bounding the number of components of $G_S$ in the proof of Theorem~\ref{thm:hypercube}. 

\bibliography{breq} \bibliographystyle{alpha}

@article{chen2017note,
  title={A note on $\alpha$-total domination in cubic graphs},
  author={Chen, Xue-gang and Gao, Ting},
  journal={Discrete Applied Mathematics},
  volume={217},
  pages={718--721},
  year={2017},
  publisher={Elsevier}
}

@inproceedings{khan2021testing,
  author       = {Muhammad Samir Khan and
                  Nitin H. Vaidya},
  editor       = {Tomasz Jurdzinski and
                  Stefan Schmid},
  title        = {Testing Equality Under the Local Broadcast Model},
  booktitle    = {Structural Information and Communication Complexity - 28th International
                  Colloquium, {SIROCCO} 2021, Wroc{\l}aw, Poland, June 28 - July 1,
                  2021, Proceedings},
  series       = {Lecture Notes in Computer Science},
  volume       = {12810},
  pages        = {262--276},
  publisher    = {Springer},
  year         = {2021},
  url          = {https://doi.org/10.1007/978-3-030-79527-6\_15},
  doi          = {10.1007/978-3-030-79527-6\_15},
  timestamp    = {Tue, 22 Jun 2021 17:01:38 +0200},
  biburl       = {https://dblp.org/rec/conf/sirocco/KhanV21.bib},
  bibsource    = {dblp computer science bibliography, https://dblp.org}
}

@article{robinson1994almost,
  title={Almost all regular graphs are {H}amiltonian},
  author={Robinson, Robert W. and Wormald, Nicholas C.},
  journal={Random Structures \& Algorithms},
  volume={5},
  number={2},
  pages={363--374},
  year={1994},
  publisher={Wiley Online Library}
}

@article{alon2017testing,
  title={Testing equality in communication graphs},
  author={Alon, Noga and Efremenko, Klim and Sudakov, Benny},
  journal={IEEE Transactions on Information Theory},
  volume={63},
  number={11},
  pages={7569--7574},
  year={2017},
  publisher={IEEE}
}

@inproceedings{LV11,
  author       = {Guanfeng Liang and
                  Nitin H. Vaidya},
  editor       = {Adrian Kosowski and
                  Masafumi Yamashita},
  title        = {Multiparty Equality Function Computation in Networks with Point-to-Point
                  Links},
  booktitle    = {Structural Information and Communication Complexity - 18th International
                  Colloquium, {SIROCCO} 2011, Gdansk, Poland, June 26-29, 2011. Proceedings},
  series       = {Lecture Notes in Computer Science},
  volume       = {6796},
  pages        = {258--269},
  publisher    = {Springer},
  year         = {2011},
  url          = {https://doi.org/10.1007/978-3-642-22212-2\_23},
  doi          = {10.1007/978-3-642-22212-2\_23},
  timestamp    = {Tue, 14 May 2019 10:00:55 +0200},
  biburl       = {https://dblp.org/rec/conf/sirocco/LiangV11.bib},
  bibsource    = {dblp computer science bibliography, https://dblp.org}
}

@inproceedings{CR15,
  author       = {Arkadev Chattopadhyay and
                  Atri Rudra},
  editor       = {Magn{\'{u}}s M. Halld{\'{o}}rsson and
                  Kazuo Iwama and
                  Naoki Kobayashi and
                  Bettina Speckmann},
  title        = {The Range of Topological Effects on Communication},
  booktitle    = {Automata, Languages, and Programming - 42nd International Colloquium,
                  {ICALP} 2015, Kyoto, Japan, July 6-10, 2015, Proceedings, Part {II}},
  series       = {Lecture Notes in Computer Science},
  volume       = {9135},
  pages        = {540--551},
  publisher    = {Springer},
  year         = {2015},
  url          = {https://doi.org/10.1007/978-3-662-47666-6\_43},
  doi          = {10.1007/978-3-662-47666-6\_43},
  timestamp    = {Fri, 27 Mar 2020 09:02:59 +0100},
  biburl       = {https://dblp.org/rec/conf/icalp/ChattopadhyayR15.bib},
  bibsource    = {dblp computer science bibliography, https://dblp.org}
}

@article{Gri21,
 author = {Griggs, Jerrold R.},
 title = {Spanning trees and domination in hypercubes},
 fjournal = {Integers},
 journal = {Integers},
 issn = {1553-1732},
 volume = {21A},
 pages = {paper a13, 11},
 year = {2021},
 language = {English},
 keywords = {05C69,05C05},
 url = {math.colgate.edu/~integers/graham13/graham13.pdf},
 zbMATH = {7497810},
 Zbl = {1493.05229}
}

@article{Beh46,
author = {Felix A. Behrend },
title = {On Sets of Integers Which Contain No Three Terms in Arithmetical Progression},
journal = {Proceedings of the National Academy of Sciences},
volume = {32},
number = {12},
pages = {331-332},
year = {1946},
doi = {10.1073/pnas.32.12.331},
URL = {https://www.pnas.org/doi/abs/10.1073/pnas.32.12.331},
eprint = {https://www.pnas.org/doi/pdf/10.1073/pnas.32.12.331}}

@article{Whi32,
 author = {Whitney, Hassler},
 title = {Non-separable and planar graphs},
 fjournal = {Transactions of the American Mathematical Society},
 journal = {Trans. Am. Math. Soc.},
 issn = {0002-9947},
 volume = {34},
 pages = {339--362},
 year = {1932},
 language = {English},
 doi = {10.2307/1989545},
 keywords = {05Cxx},
 zbMATH = {3004555},
 Zbl = {0004.13103}
}

@misc{Fekete,
  author = "{Wikipedia contributors}",
  title = "Fekete's Lemma",
  year = "2026",
  url = "https://en.wikipedia.org/wiki/Fekete's_lemma",
  note = "[Online; accessed 23-February-2026]"
}
\end{document}